\crefname{equation}{}{}
\newtheorem{theorem}{Theorem}[section]
\newtheorem{lemma}[theorem]{Lemma}
\newtheorem{proposition}[theorem]{Proposition}
\newtheorem{conjecture}[theorem]{Conjecture}
\newtheorem*{conjecture*}{Conjecture}
\theoremstyle{definition}
\theoremstyle{remark}
\newtheorem*{remark}{Remark}
\newtheorem*{remarks}{Remarks}
\newtheorem*{example}{Example}
\newtheorem*{examples}{Examples}
\numberwithin{equation}{section}
\newcommand{\N}{\mathbb N}
\DeclareMathOperator{\Sym}{sym}
\def\lp{\left(}
\def\rp{\right)}
\newcommand{\Q}{\mathbb Q}
\newcommand{\Z}{\mathbb Z}
\title[Integer partitions detect the primes]{Integer partitions detect the primes}
\date{\today}
\subjclass[2020]{Primary 11P81, 05A17; Secondary 11Fxx}
\keywords{partitions, primes,  quasimodular forms}
\author{William Craig, Jan-Willem van Ittersum \and Ken Ono}
\address{Department of Mathematics and Computer Science, University of Cologne, Weyertal 86-90, 50931 Cologne, Germany}
\email{wcraig@uni-koeln.de}
\email{j.w.ittersum@uni-koeln.de}
\address{Dept. of Mathematics, University of Virginia, Charlottesville, VA 22904}
\email{ko5wk@virginia.edu}
\date{}
\begin{document}
	\begin{abstract}  We show that integer partitions, the fundamental building blocks in additive number theory, detect prime numbers in an unexpected way. Answering a question of Schneider, we show that the primes are the solutions to special equations in partition functions.  For example, an integer $n\geq 2$ is prime if and only if
		$$
		(3n^3 - 13n^2 + 18n - 8)M_1(n) + (12n^2 - 120n + 212)M_2(n) -960M_3(n) = 0,
		$$
		where the $M_a(n)$ are MacMahon's well-studied partition functions. More generally, for {\it MacMahonesque} partition functions $M_{\vec{a}}(n),$ we prove that there are infinitely many such prime detecting equations with constant coefficients, such as 		\begin{displaymath}
		80M_{(1,1,1)}(n)-12M_{(2,0,1)}(n)+12M_{(2,1,0)}(n)+\dots-12M_{(1,3)}(n)-39M_{(3,1)}(n)=0.
		\end{displaymath}
			\end{abstract}
	
	\maketitle
	\section{Introduction and Statement of Results}
		
	{\it Diophantine sets} $S$ are subsets of $\Z_+^n$ that arise from the positive integer solutions of a Diophantine equation. For example, the set of composite numbers is Diophantine, which is easily seen by considering the polynomial equation
	$$
	C(x_1, x_2, x_3):= x_1-(x_2+1)(x_3+1)=0.
	$$
	Indeed, we have that $S_C:=\{ x_1 \in \Z_+ \ : \ C(x_1, x_2, x_3)=0\ {\text {\rm with}}\ x_2, x_3\in \Z_+\}$ is the set of composite numbers. In 1970, Matiyasevich \cite{Matiyasevich} famously proved that a set is Diophantine if and only if it is computably enumerable. Building on work by Davis, Putnam, and Robinson, this result resolved Hilbert's tenth problem in the negative: There is no algorithm that determines whether a generic Diophantine equation has integer solutions.
	
	In contrast to the simple case of the composite numbers, Matiyasevich's work implies that the set of primes is Diophantine, and even more, is  polynomially representable. In 1976, Jones, Sato, Wada, and Wiens \cite{JSW} made this explicit; they produced a degree 25 polynomial in 26 variables whose positive values, as the variables vary over nonnegative integers, is the set of primes.
	
	In analogy, we consider whether the set of primes is {\it partition theoretic,} by which we mean solutions to equations in partition functions
	(for background on partitions see~\cite{Andrews}).  This is a question of Schneider, as an example of his general philosophy and research program where classical number theoretic topics (e.g. multiplicative number theory, prime number distribution, and zeta-functions) are informed by partition theory (for example, see~\cite{Schneider1, SchneiderDawseyJust} and his works on arithmetic densities~\cite{Schneider2} and partition zeta-functions~\cite{Schneider3}).
		
	We show that the primes are partition theoretic in infinitely many ways.  Our first glimpse of this phenomenon  arises from  MacMahon's~\cite{MacMahon} $q$-series: 
	\begin{align} \label{Ua} 
	\mathcal{U}_a(q)=\sum_{n\geq1}M_a(n)\,q^n:=\sum_{0< s_1<s_2<\cdots<s_a} \frac{q^{s_1+s_2+\cdots+s_a}}{(1-q^{s_1})^2(1-q^{s_2})^2\cdots(1-q^{s_a})^2}.
	\end{align}
	For $a\geq 1$, the partition function
	$M_a(n)$  sums the products of the \emph{multiplicities} of partitions of~$n$ with $a$ different part sizes. To be precise, we have
	\begin{equation}\label{M_def}
	M_a(n)=\sum_{\substack{0<s_1<s_2<\dots<s_a\\
			n=m_1 s_1+m_2s_2+\dots+m_a s_a}} m_1m_2\cdots m_a.
	\end{equation}
	We find linear equations in these functions, with polynomial coefficients, that detect the primes. 
	
	\begin{theorem}\label{FirstExamples}  The following are true:
		
		\noindent
		(1) For positive integers $n,$ we have
		$$
		(n^2-3n+2)M_1(n)-8 M_2(n)\geq 0,
		$$
		and for $n\geq 2,$ this expression vanishes if and only if $n$ is prime.
		
		\noindent
		(2) For positive integers $n,$ we have
		$$
		(3n^3 - 13n^2 + 18n - 8)M_1(n) + (12n^2 - 120n + 212)M_2(n) -960M_3(n) \geq 0,
		$$
		and for $n\geq 2,$ this expression vanishes if and only if $n$ is prime.
	\end{theorem}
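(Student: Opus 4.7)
My plan is to reduce each of the two identities to a statement about classical divisor sums, then verify the sign claim by an elementary pairing of divisors.

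First I would establish closed-form expressions
\[
M_1(n) = \sigma_1(n), \qquad M_2(n) = \tfrac{1}{8}\bigl(\sigma_3(n)-(2n-1)\sigma_1(n)\bigr),
\]
together with a similar (but longer) identity expressing $M_3(n)$ as a linear combination of $\sigma_5(n)$, $\sigma_3(n)$, $\sigma_1(n)$ with polynomial-in-$n$ coefficients. The first identity is immediate from the geometric series in \eqref{Ua}. The formula for $M_2(n)$, which is essentially due to MacMahon, follows by splitting
\[
\mathcal{U}_1(q)^2 = 2\mathcal{U}_2(q) + \sum_{s\geq 1}\frac{q^{2s}}{(1-q^s)^4}
\]
on the diagonal, combined with the classical convolution identity $\sum_{k=1}^{n-1}\sigma_1(k)\sigma_1(n-k) = \tfrac{1}{12}\bigl(5\sigma_3(n)-(6n-1)\sigma_1(n)\bigr)$ and the expansion $\sum_{s\geq 1}q^{2s}/(1-q^s)^4 = \tfrac{1}{6}\sum_{n\geq 1}(\sigma_3(n)-\sigma_1(n))q^n$. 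An analogous weight-$6$ computation, starting from a similar diagonal/off-diagonal split of $\mathcal{U}_1\mathcal{U}_2$ or $\mathcal{U}_1^3$, yields the required formula for $M_3(n)$; this is natural since $\mathcal{U}_a$ is known to be quasimodular of weight $2a$.

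Substituting into the expression in part (1) collapses it to
\[
(n^2-3n+2)M_1(n)-8M_2(n) = (n^2-n+1)\sigma_1(n)-\sigma_3(n) = \sum_{d\mid n} d\bigl((n-d)(n+d)-(n-1)\bigr).
\]
In this divisor sum the contributions from $d=1$ and $d=n$ are $+n(n-1)$ and $-n(n-1)$, and so cancel exactly. For any other divisor $d$ of $n$ one has $1<d\leq n/2$, whence $(n-d)(n+d)\geq 3n^2/4 > n-1$ for $n\geq 2$, so each remaining summand is strictly positive. Hence the sum is nonnegative for all $n$, and for $n\geq 2$ it vanishes precisely when $n$ has no divisor strictly between $1$ and $n$, i.e.\ when $n$ is prime. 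Part (2) is handled by the same blueprint: substitution converts the expression into a polynomial combination of $\sigma_5(n)$, $\sigma_3(n)$, $\sigma_1(n)$ with polynomial-in-$n$ coefficients, which after rearrangement becomes a sum $\sum_{d\mid n}d\cdot P(d,n)$ over divisors of $n$ for a certain polynomial $P$; the pairing $d\leftrightarrow n/d$ again forces the endpoint contributions to cancel, and bounding the interior summands yields strict positivity at composite $n$.

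The main obstacle lies in part (2). The explicit closed form for $M_3(n)$ is considerably bulkier than that for $M_2(n)$, and the specific coefficients $3n^3-13n^2+18n-8$, $12n^2-120n+212$, and $-960$ are precisely those that engineer the endpoint cancellation at $d\in\{1,n\}$ while keeping every interior term nonnegative. Verifying that these coefficients really do produce a manifestly nonnegative divisor sum — and in particular controlling the polynomial $P(d,n)$ uniformly on the range of proper divisors $1<d\leq n/2$ — is the most delicate step.
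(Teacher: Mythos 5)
Your approach is sound and, at its computational core, is the same as the paper's: both reduce the given combinations to divisor sums supported on the proper divisors $1<d<n$ and check term-by-term positivity. The paper packages this through distinguished quasimodular forms $H_6$ and $H_8$ (showing the two expressions equal $6H_6$ and $36H_8$) and a general prime-detection criterion, which buys reusability for Theorem~1.2; your version is more direct and self-contained for this one theorem. Part~(1) of your argument is complete and correct: the identity
$(n^2-n+1)\sigma_1(n)-\sigma_3(n)=\sum_{d\mid n}d\bigl((n-d)(n+d)-(n-1)\bigr)$, the exact cancellation of the $d=1$ and $d=n$ terms, and the bound $n^2-d^2\geq \tfrac34 n^2>n-1$ on proper divisors are all right, and your derivation of $M_2(n)=\tfrac18(\sigma_3(n)-(2n-1)\sigma_1(n))$ via the diagonal split of $\mathcal{U}_1^2$ is a legitimate alternative to simply citing MacMahon.

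The genuine gap is that part~(2) is a plan rather than a proof: you neither derive the closed form for $M_3(n)$ nor verify that the specific coefficients produce a nonnegative divisor sum, and you explicitly flag this verification as the delicate open step. For the record, the needed input is MacMahon's formula
$1920\,M_3(n)=(40n^2-100n+37)\sigma_1(n)-10(3n-5)\sigma_3(n)+3\sigma_5(n)$,
and upon substitution the entire expression in part~(2) collapses to
\[
\tfrac32\bigl(-n^2\sigma_1(n)+(n^2+1)\sigma_3(n)-\sigma_5(n)\bigr)=\tfrac32\sum_{d\mid n}d\,(d^2-1)(n^2-d^2),
\]
in which the endpoint terms $d=1$ and $d=n$ each vanish individually (no pairing needed) and every interior term is visibly positive. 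So the delicacy you anticipate in ``controlling $P(d,n)$ uniformly'' does not materialize; the obstacle is purely the unexecuted algebra, not a flaw in the method. To make the proof complete you must either carry out that substitution or derive the $M_3$ formula from a weight-$6$ quasimodularity computation as you suggest.
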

	
	\begin{remark}
	The expressions in Theorem~\ref{FirstExamples} vanish trivially when $n=1.$ 
	\end{remark}
	\begin{remark} By taking suitable linear combinations (with polynomial coefficients) of the two expressions in Theorem~\ref{FirstExamples}, it is simple to produce infinitely many further examples. For example, we have
		$$
		(3n^4 - 3n^3 - 17n^2 + 27n - 10 )M_1(n) - (240n - 400)M_2(n) - 1920 M_3(n).
		$$
		\end{remark}
	
	There are more examples of such prime detecting expressions involving the $M_a(n)$  functions.
	
	\begin{theorem}\label{General_Ua}
		Suppose that $E(n)$ is in \cref{tab:1} in the Appendix. For positive integers $n,$ we have
	$E(n)\geq 0,$ and for $n\geq 2,$ we have $E(n)=0$ 
		 if and only if $n$ is prime.
	\end{theorem}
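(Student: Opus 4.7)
The plan is to exploit the quasimodularity of MacMahon's generating functions to reduce the primality condition to identities among divisor sums. By the (by now classical) theorem that each $\mathcal{U}_a(q)$ is a quasimodular form on $\SL_2(\Z)$ of weight $2a$---and the analogous statement for the MacMahonesque series $\mathcal{U}_{\vec{a}}(q)$---each such generating function can be written as a polynomial expression in the Eisenstein series $E_2, E_4, E_6$ and their Ramanujan derivatives. Extracting coefficients then exhibits each $M_{\vec{a}}(n)$ as a polynomial-in-$n$ combination of divisor sums $\sigma_{2k-1}(n)$.

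The key observation is the primality identity $\sigma_k(p) = p^k + 1$: for prime $p$, every divisor sum collapses to a polynomial in $p$, and hence each $M_{\vec{a}}(p)$ reduces to a polynomial $P_{\vec{a}}(p)$. More generally, substituting this identity yields a canonical decomposition
\[
M_{\vec{a}}(n) = P_{\vec{a}}(n) + \sum_k c_{\vec{a}, k}(n)\bigl(\sigma_{2k-1}(n) - n^{2k-1} - 1\bigr),
\]
with polynomial coefficients $c_{\vec{a},k}(n)$, and each ``deviation'' $\sigma_{2k-1}(n) - n^{2k-1} - 1$ is manifestly nonnegative and vanishes if and only if $n$ is prime (or $n=1$).

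The construction of prime detectors then becomes a finite-dimensional linear-algebra problem: choose polynomial coefficients $p_i(n)$ so that the polynomial parts $\sum_i p_i(n) P_{\vec{a}_i}(n)$ cancel identically in $n$. With sufficiently many MacMahonesque functions in play, this system is underdetermined and produces the infinitely many relations catalogued in \cref{tab:1}. Any such solution collapses to an expression of the shape $\sum_k Q_k(n)\bigl(\sigma_{2k-1}(n) - n^{2k-1} - 1\bigr)$, which visibly vanishes at primes and at $n=1$.

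The principal obstacle is proving nonnegativity for \emph{all} $n \geq 1$, not merely the vanishing at primes. This requires arranging the residual polynomials $Q_k(n)$ so that the combined expression is nonnegative on $\Z_{\geq 1}$. For each entry of the table I expect this to be verified via direct polynomial estimates on the $Q_k(n)$, together with elementary inequalities comparing higher divisor-sum deviations to $\sigma_1(n) - n - 1$; this verification is likely the most delicate step, and in the MacMahonesque case will require careful bookkeeping of the weight structure so that the dominant term controls the lower ones.
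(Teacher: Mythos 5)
Your reduction is sound as far as it goes: by Andrews--Rose each $\mathcal{U}_a(q)$ lies in $\widetilde{M}_{\leq 2a}$, so each $M_a(n)$ is a polynomial-in-$n$ combination of the $\sigma_{2k-1}(n)$, and the requirement that $E(p)=0$ for all primes $p$ is indeed a finite linear condition on the polynomial coefficients (this is exactly the paper's constraint $\sum_{\ell} p_{2\ell}(x)(x^{2\ell-1}+1)=0$, which holds as a polynomial identity once it holds at all primes). But the step you yourself flag as ``the most delicate'' is where the proposal has a genuine gap, and the substitute you sketch would not close it. Writing $E(n)=\sum_k Q_k(n)\bigl(\sigma_{2k-1}(n)-n^{2k-1}-1\bigr)$, several of the $Q_k(n)$ are necessarily negative (already for the first table entry one finds $Q_3=-1$), and the deviations $\sigma_{2k-1}(n)-n^{2k-1}-1=\sum_{1<d<n,\,d\mid n}d^{2k-1}$ grow at very different rates in the divisor $d$; a ``global'' comparison of these deviations against $\sigma_1(n)-n-1$ gives no mechanism for certifying the sign, and in any case the theorem demands not merely $E(n)\geq 0$ but \emph{strict} positivity at every composite $n\geq 2$, which your sketch never addresses.

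The paper's resolution is to organize everything termwise over the proper divisors $d$ of $n$. It isolates distinguished forms $H_k$ (see~\eqref{H definition}) whose coefficients factor, for even $k\geq 8$, as
\[
24\,b_n(H_k)\;=\;\sum_{\substack{1<d<n\\ d\mid n}} d^{k-7}\lp d^2-1\rp\lp n^2-d^2\rp,
\]
where each summand is visibly positive because $2\leq d\leq n/2<n$; hence $b_n(H_k)\geq 0$ with equality for $n\geq 2$ precisely when $n$ is prime (the case $k=6$ is handled via $(D+1)H_6=\tfrac16 f_{1,3}$), and the same holds for every $D^mH_k$. Each table entry is then identified by finite linear algebra in $\widetilde{M}_{\leq 2a}$ as a combination of these with \emph{nonnegative} constant coefficients, namely $6H_6$, $36H_8$, $90H_{10}$, $90H_{12}$, and $30(D^2+1)H_{14}+30H_{16}$, which settles nonnegativity, the vanishing at primes, and the strict positivity at composites in one stroke. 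To repair your argument you would need to supply such a divisor-by-divisor positivity certificate for each entry; without it the ``only if'' direction and the inequality $E(n)\geq 0$ remain unproved. (Two smaller corrections: the deviation $\sigma_{2k-1}(n)-n^{2k-1}-1$ equals $-1$ at $n=1$, not $0$, so $E(1)=0$ needs the polynomial cancellation rather than the vanishing of the deviations; and the MacMahonesque series $\mathcal{U}_{\vec a}(q)$ are generally \emph{not} quasimodular---only their symmetrized sums are---though this does not affect the functions $M_a$ appearing in this particular theorem.)
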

	
	Based on a computer search for further such prime detecting partition equations, we believe that this list generates all such expressions involving MacMahon's $M_a(n)$ partition functions.
	
	\begin{conjecture*} 
		Let $\vec{P}(x):=(p_1(x), p_2(x),\dots, p_a(x))\in \Z[x]^a$ be a vector of relatively prime integer polynomials.
		For integers $n\geq 2,$ suppose that
		$$
		E(n) =  p_1(n)M_1(n)+p_2(n)M_2(n)+\dots+p_a(n)M_a(n)\geq 0,
		$$
		and  vanishes precisely on the primes. Then $E(n)$ is a $\mathbb{Q}[n]$-linear combination of entries in \cref{tab:1}.
	\end{conjecture*}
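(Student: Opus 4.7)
The plan is to recast the conjecture in the language of quasimodular forms. By standard results, each generating function $\mathcal{U}_a(q)$ is a quasimodular form of weight $2a$ on $\mathrm{SL}_2(\Z)$, and the Ramanujan operator $D = q\,\tfrac{d}{dq}$ raises weight by $2$ while preserving quasimodularity. Since $\sum_n n^k M_a(n)\,q^n = D^k \mathcal{U}_a(q)$, the generating function
$$F_E(q) := \sum_{n \geq 1} E(n)\,q^n = \sum_{j=1}^{a} p_j(D)\,\mathcal{U}_j(q)$$
lies in the ring of quasimodular forms on $\mathrm{SL}_2(\Z)$. The hypothesis that $E$ vanishes precisely on the primes becomes the requirement that $F_E$ has vanishing Fourier coefficient exactly at each prime index $n \geq 2$.

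\textbf{Reduction and computation.} Once the problem is phrased in these terms, I would fix bounds on $a$ and on $\max_j \deg p_j$, so that $F_E$ lies in a fixed finite-dimensional $\Q$-space $\mathcal{Q}_W$ of quasimodular forms of weight at most $W$ and bounded depth. Imposing vanishing at all primes $p \leq B$, for $B$ a Sturm-type bound adapted to $\mathcal{Q}_W$, yields finitely many linear conditions whose common solution space is the full set of prime-vanishing $F_E \in \mathcal{Q}_W$. The entries of \cref{tab:1}, together with their images under multiplication by elements of $\Q[n]$ (equivalently, applications of polynomials in $D$), produce an explicit subspace of this solution space. One then checks, weight by weight, that this subspace exhausts the prime-detecting $F_E$, using the non-negativity condition $E(n) \geq 0$ to rule out spurious elements. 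In low weights this amounts to a finite calculation, consistent with the authors' computer search.

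\textbf{Main obstacle and strategy for the general case.} The essential difficulty is promoting the weight-by-weight verification to a uniform classification valid for all $W$. Primes, unlike residue classes or multiplicatively structured sets, do not interact naturally with Hecke operators on quasimodular forms, so there is no obvious group-theoretic obstruction forcing new solutions at high weight to descend from lower-weight ones. My strategy would be to prove an inductive descent statement: after applying a suitable lowering operator (for instance the Serre derivative, or multiplication by an Eisenstein series followed by projection onto a lower-weight component of the quasimodular ring), any prime-detecting $F_E$ of weight $W$ should differ from a $\Q[n]$-linear combination of \cref{tab:1} entries by a prime-detecting form of strictly smaller weight, allowing reduction to the finitely many base cases established above. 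Making this step rigorous requires a precise understanding of how the prime-vanishing condition propagates through the quasimodular ring, and I expect this to demand either a Rankin--Selberg style input coupled with sieve-theoretic control over $E$ at composite $n$, or a genuinely new structural link between the distribution of primes and the algebra of quasimodular forms. Building this bridge is, in my view, the crux of the problem.
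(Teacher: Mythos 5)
The statement you are trying to prove is labelled as a \emph{Conjecture} in the paper, and the authors offer no proof of it; they support it only by a computer search. So there is no argument in the paper to compare yours against, and the relevant question is whether your proposal closes the gap the authors could not. It does not. Your framework (pass to $F_E=\sum_j p_j(D)\,\mathcal U_j(q)$, which is quasimodular since $\mathcal U_a\in\widetilde M_{\leq 2a}$ by Andrews--Rose, and translate prime-vanishing of $E$ into prime-vanishing of Fourier coefficients) is exactly the paper's setup for the theorems it \emph{does} prove. But two of your steps are not sound as stated. First, there is no ``Sturm-type bound'' for the condition that the Fourier coefficients vanish at \emph{all} primes: a Sturm bound certifies that a form is identically zero, whereas here you need to propagate vanishing at finitely many prime indices to all prime indices. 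That propagation is available only for forms in the quasimodular Eisenstein space $\mathcal E$, where the coefficient at $n$ is a fixed polynomial expression in $n$ and the $\sigma_{k}(n)$ (this is how the paper proves Theorem~2.5); for a form with a nonzero cuspidal component no such rigidity is known, and indeed the paper's separate Conjecture~2.4 ($\Omega=\mathcal E\cap\Omega$) records precisely this open problem. Your finite computation therefore does not even correctly characterize the prime-vanishing forms in a fixed weight unless you first show $F_E\in\mathcal E$.

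Second, and more fundamentally, your ``inductive descent'' is not an argument but a description of what an argument would need to accomplish, as you yourself acknowledge. Even granting the paper's Theorem~2.5 (which already classifies $\mathcal E\cap\Omega$ completely as the span of the $D^nH_k$ --- a stronger and cleaner statement than your weight-by-weight solve), the conjecture still requires identifying exactly which elements of $\langle D^nH_k\rangle$ lie in the image of $(p_1,\dots,p_a)\mapsto\sum_j p_j(D)\,\mathcal U_j$, uniformly in the weight. The paper's own Table~1 shows this image is subtle: the last entry is $30(D^2+1)H_{14}+30H_{16}$ rather than $H_{14}$ and $H_{16}$ separately, indicating nontrivial obstructions to reachability that your proposal does not engage with. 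Neither a lowering operator nor Rankin--Selberg input is known to interact with the prime-vanishing condition in the way you would need. In short: the setup is right and matches the paper, but the proposal contains no mechanism that would settle the conjecture, which remains open.
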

	
	Despite this conjecture,
	we show that Theorems~\ref{FirstExamples} and~\ref{General_Ua} are glimpses of a rich infinite family of such expressions.
		To make this precise, we define natural generalizations of MacMahon's $M_a(n).$ For $a \geq 1$ and a vector $\vec{a} = (v_1, v_2, \dots, v_a) \in \N^a$, we define the {\it MacMahonesque partition function}\footnote{These functions were also considered in Bachmann's master's thesis (see Chapter 5 of~\cite{BachmannMA}).}
	\begin{equation}\label{M_general_def}
	M_{\vec{a}}(n):=\sum_{\substack{0<s_1<s_2<\dots<s_a\\m_1,\ldots,m_a>0\\
			n=m_1 s_1+m_2s_2+\dots+m_a s_a}} m_1^{v_1} m_2^{v_2}\cdots m_a^{v_a},
	\end{equation}
	certain sums of  degree $|\vec{a}|:=\nu_1+\cdots+\nu_a$ monomials  in the part multiplicities.
	
	\begin{remark}	We note that $M_a(n)\neq M_{(a)}(n),$ when $a>1.$  Instead, we have $M_a(n)=M_{\vec{a}}(n)$, where
		$\vec{a}=(1,\dots,1)\in \N^a.$ 
			\end{remark}

	For these generalized MacMahon functions, we establish the existence of infinitely many linear equations with integer coefficients whose set of solutions, where $n\geq 2$, is the set of primes. 
	
	\begin{theorem}\label{GeneralDetection} If $d\geq 4$, then the following are true.
	
	\noindent
	(1) There are relatively prime integers $c_{\vec{a}}$ such that for positive integers $n$ we have
		$$
		\sum_{|\vec{a}|\leq d} c_{\vec{a}}\, M_{\vec{a}}(n) \geq 0,
		$$
		and  for $n\geq 2,$ this expression vanishes if and only if $n$ is prime. 
		
		\noindent
		(2) As $d\rightarrow \infty$, there are $\gg d^2$ many linearly independent expressions satisying (1).
	\end{theorem}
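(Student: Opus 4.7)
My plan is to pass from the partition-theoretic setting to the ring of quasimodular forms on $\SL_2(\Z)$, where a dimension count will supply the required family of prime detectors. The structural input, going back to Chapter~5 of Bachmann's master's thesis~\cite{BachmannMA}, is that each generating series $\mathcal U_{\vec a}(q) := \sum_{n\geq 1} M_{\vec a}(n)\,q^n$ is a quasimodular form of weight $|\vec a|+\ell(\vec a)$ on $\SL_2(\Z)$, and that the collection $\{\mathcal U_{\vec a}\}$ spans a subspace $V_d$ of the weight filtration $\wt M_{\leq 2d}(\SL_2(\Z))$ whose dimension grows like $d^2/12$. Throughout, write $D := q\,\tfrac{d}{dq}$ for Ramanujan's derivation, which sends $\wt M_k$ into $\wt M_{k+2}$.

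For part~(1), I would upgrade the polynomial-coefficient detector $(n-1)(n-2)M_1(n) - 8M_2(n)$ from Theorem~\ref{FirstExamples}(1) to a constant-coefficient identity. At the generating function level this expression equals $(D-1)(D-2)\mathcal U_1(q) - 8\mathcal U_2(q)\in\wt M_6$, so by the spanning property above it admits an explicit $\Q$-linear decomposition into the $\mathcal U_{\vec b}$ with $|\vec b|+\ell(\vec b)\leq 6$. Clearing denominators produces integer coefficients $c_{\vec a}$ indexed by vectors with $|\vec a|\leq 4$, yielding part~(1).

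For part~(2), the key observation is that if $E$ is any prime detector then so is $\phi(D)E = \sum\phi(n)E(n)\,q^n$ for every polynomial $\phi\in\Q[n]$ that is nonnegative on $\Z_{>0}$, since $\phi(n)E(n)$ vanishes whenever $E(n)$ does and is nonnegative by construction. Combining this $D$-absorption machinery with the family of polynomial-coefficient detectors supplied by Theorem~\ref{General_Ua} yields a large supply of constant-coefficient detectors: at each weight $2k$ in $[6,2d]$ one produces $\Omega(k)$ linearly independent detectors inside $V_d\cap\wt M_{2k}$ (whose dimension itself grows linearly in $k$), and summing over $k$ gives $\gg d^2$ detectors in total. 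Linear independence follows from the weight grading on $\wt M$ together with the injectivity of the assignment $\phi\mapsto\phi(D)E$.

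The main obstacles are (a) nonnegativity of each constructed detector on every positive integer, and (b) establishing the lower bound $\Omega(k)$ on the number of independent prime detectors inside the weight-$2k$ slice of $V_d$. Obstacle (a) should yield to Hardy--Ramanujan--Meinardus-type asymptotics $M_{\vec a}(n)=\Theta(n^{|\vec a|+\ell(\vec a)-1})$, which reduce positivity at large $n$ to a sign check on a single leading coefficient; the finite range $n<N_0$ is absorbed by restricting $\phi$ to be a sum of squares of integer polynomials. Obstacle (b) is the core technical hurdle: one must show that the prime-vanishing constraints cut into $V_d\cap\wt M_{2k}$ only sub-linearly in $k$, which in turn requires a careful analysis of how the values $M_{\vec a}(p)$ distribute across different vectors $\vec a$---this is where the genuine partition-theoretic content of part~(2) resides.
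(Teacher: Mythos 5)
There is a genuine gap, and also a false structural premise. First, the premise: you assert that each $\mathcal U_{\vec a}(q)$ is a quasimodular form of weight $|\vec a|+\ell(\vec a)$, so that the $\mathcal U_{\vec a}$ span a subspace $V_d\subseteq \widetilde{M}_{\leq 2d}$. This is backwards. A generic $\mathcal U_{\vec a}$ is \emph{not} quasimodular; the space $\mathcal Z_q$ they span strictly contains $\widetilde{M}$ (only the symmetrized sums $\mathcal U_{\vec a}^{\Sym}$ for $\vec a$ with odd entries land in $\widetilde M$, which is Theorem~\ref{lem:symsum}(1)). What the argument actually requires is the reverse inclusion $\widetilde{M}_{\leq k}\subseteq \operatorname{span}\{\mathcal U_{\vec a}: |\vec a|+\ell(\vec a)\leq k\}$, i.e.\ Theorem~\ref{lem:symsum}(2) (via Bachmann--K\"uhn). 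Your part~(1) can be repaired along these lines: $(D^2-3D+2)\,\mathcal U_1-8\,\mathcal U_2$ lies in $\mathcal Z_q$ because $\mathcal Z_q$ is a differential algebra (Theorem~\ref{thm:BachmannKuhn}, packaged as Theorem~\ref{Convolution}(2)), so it is a constant-coefficient combination of $M_{\vec a}(n)$'s with $|\vec a|\leq 4$; this is exactly how the paper's $\Psi_1$ arises, and multiplying by $n^{d-4}$ and reducing again covers all $d\geq 4$.

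The serious gap is in part~(2), where you explicitly leave the central step unresolved: your ``obstacle~(b)'' --- producing $\Omega(k)$ independent prime detectors in each weight, equivalently showing the prime-vanishing constraints cut down the space only sublinearly --- \emph{is} the theorem, and deferring it to ``a careful analysis of how the values $M_{\vec a}(p)$ distribute'' is not a proof. The paper resolves this by an explicit construction inside the Eisenstein subspace: the forms $H_k$ of \eqref{H definition} satisfy, for $k\geq 8$ and $n\geq 2$,
\begin{equation*}
24\,b_n(H_k)=\sum_{\substack{1<d<n\\ d\mid n}} d^{k-7}\left(d^2-1\right)\left(n^2-d^2\right),
\end{equation*}
which is manifestly nonnegative and vanishes exactly at primes, and Lemma~\ref{lem:lindepH} shows the $D^nH_\ell$ of weight $\leq k$ are linearly independent and number $\tfrac18(k-2)(k-4)\gg k^2$; Theorem~\ref{lem:symsum}(2) then rewrites each as a constant-coefficient combination of $M_{\vec a}(n)$'s. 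Your proposed substitute for the positivity argument also fails on its own terms: since any detector has coefficients vanishing at primes, its leading asymptotics cancel, so a ``sign check on a single leading coefficient'' cannot establish nonnegativity at composite $n$, and nonnegativity is required at \emph{every} positive integer, not just $n$ large. The device $\phi(D)E$ with $\phi$ nonnegative on $\Z_{>0}$ is sound but only recycles a fixed detector $E$; without an explicit base family such as the $H_k$ it cannot produce the quadratic count.
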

	
	\begin{examples}\label{ex:thm1.3} We offer two examples of Theorem~\ref{GeneralDetection}.
		For positive integers $n$, we find that
		\begin{displaymath}
		\begin{split}
		\Psi_1(n):=63M_{(2,2)}(n)-12&M_{(3,0)}(n) 
		-39M_{(3,1)}(n)-12M_{(1,3)}(n)\\
		&+80M_{(1,1,1)}(n)-12M_{(2,0,1)}(n)+12 M_{(2,1,0)}(n) +12 M_{(3,0,0)}(n)\geq 0,
		\end{split}
		\end{displaymath}
		and
		\begin{displaymath}
		\begin{split}
		\Psi_2(n):= 14M_{(1)}&(n) - 15M_{(2)}(n) -2 M_{(3)}(n)  + 3M_{(4)}(n)+ 30M_{(2,0)}(n)-72M_{(3,0)}(n) \\
		&-36M_{(2,1)}(n) - 6M_{(4,0)}(n)  -12M_{(3,1)}(n) + 72M_{(2,1,0)}(n) + 72M_{(3,0,0)}(n) \geq 0.
		\end{split}
		\end{displaymath}
		For $n\geq 2$ these expressions vanish if and only if $n$ is prime. Finally, we note the nontrivial identity
		$\Psi_2(n)=\frac{36}{11}\Psi_1(n).$
	\end{examples}
	
	In addition to the equations covered by Theorem~\ref{GeneralDetection}, there are many prime detecting equations involving convolutions of the MacMahonesque functions. For example, one can show that
	\begin{displaymath}
	\begin{split}
	\Psi_3(n):=10M_{(1)}&(n)-17 M_{(3)}(n)+7 M_{(5)}(n)\\+&12\sum_{i+j=n} M_{(1)}(i) \lp M_{(1)}(j)-10 M_{(3)}(j) \rp +96\!\!\sum_{i+j+\ell=n}\!\! M_{(1)}(i) M_{(1)}(j) M_{(1)}(\ell) 
	\geq 0,
	\end{split}
	\end{displaymath}
	and similarly detects primes.
	However, it turns out that this expression can be reformulated much more simply thanks to the identity
		$\Psi_3(n)=\Psi_2(n).$
	
	Such simplifications always exist.
	It turns out that arbitrary convolutions of MacMahonesque functions are always linear combinations of such functions with constant coefficients. Furthermore, linear combinations of MacMahonesque functions with polynomial coefficients can be reduced similarly. 
	To be precise, we have the following result which is implicit in the proof of Theorem~\ref{GeneralDetection}.
	
	\begin{theorem}\label{Convolution} The following are true.
		
		\noindent (1) If $\vec{\alpha}\in \Z_+^a$ and $\vec{\beta}\in \Z_+^{b}$, then
		there are rational numbers $c_{\vec{a}}$ such that for every $n$ we have
		\[ \sum_{i+j=n} M_{\vec{\alpha}}(i) M_{\vec{\beta}}(j) = \sum_{|\vec{a}| \leq {\color{black}|\vec{\alpha}|+|\vec{\beta}|+a+b-1}} c_{\vec{a}} M_{\vec{a}}(n).\]  
		
		\noindent (2) If $\vec{\alpha}\in \Z_+^a,$ then there are rational numbers $c_{\vec{a}}$ such that for every  $n$ we have
		\[ n M_{\vec{\alpha}}(n) = \sum_{{\color{black}|\vec{a}|\leq |\vec{\alpha}|+a+1}} c_{\vec{a}} M_{\vec{a}}(n).\]  
	\end{theorem}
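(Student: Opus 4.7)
The plan is to recast Theorem~\ref{Convolution} at the level of generating series. Setting
\[ \mathcal{U}_{\vec{a}}(q) := \sum_{n \geq 1} M_{\vec{a}}(n)\, q^n = \sum_{0<s_1<\cdots<s_a}\prod_{i=1}^a f_{v_i}(q^{s_i}), \qquad f_v(x) := \sum_{m\geq 1} m^v x^m, \]
the two claims become closure properties of the $\Q$-span $\mathcal{V}$ of the $\mathcal{U}_{\vec{a}}(q)$: part (1) asserts closure of $\mathcal{V}$ under multiplication, while part (2) asserts closure under the derivation $D := q\frac{d}{dq}$. These are the structural features of the Bachmann bracket algebra (cf.~\cite{BachmannMA}), to which the MacMahonesque series are equivalent under reindexing, so the claims reduce to quantitative refinements of the known closure theorems there.

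For part (1), I would carry out the stuffle (quasi-shuffle) product. The expansion of $\mathcal{U}_{\vec{\alpha}}(q)\,\mathcal{U}_{\vec{\beta}}(q)$ is a double sum over strictly-increasing $\vec{s}, \vec{t}$, which I partition by interleaving pattern, permitting equalities $s_i = t_j$ (collisions). Non-collision patterns directly produce MacMahonesque functions of length $a+b$ and weight $|\vec{\alpha}|+|\vec{\beta}|$. At each collision I apply the product-to-sum identity
\[ f_v(x)\, f_w(x) = \sum_{u=0}^{v+w+1} c^u_{v,w}\, f_u(x), \]
justified because $\sum_{m+n=k} m^v n^w$ is a polynomial in $k$ of degree $v+w+1$ (Faulhaber's formula). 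A pattern with $c$ collisions then produces a MacMahonesque function of length $a+b-c$ and weight at most $|\vec{\alpha}|+|\vec{\beta}|+c$, whence $|\vec{a}| + \text{length} \leq |\vec{\alpha}|+|\vec{\beta}|+a+b$. Since the length is at least $1$, we conclude $|\vec{a}| \leq |\vec{\alpha}|+|\vec{\beta}|+a+b-1$.

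For part (2), applying $D$ termwise via $Df_v(q^s) = s f_{v+1}(q^s)$ and the product rule yields
\[ D\, \mathcal{U}_{\vec{a}}(q) = \sum_{k=1}^a\, \sum_{\vec{s}} s_k\, f_{v_k+1}(q^{s_k}) \prod_{j \neq k} f_{v_j}(q^{s_j}). \]
The obstruction is the prefactor $s_k$, which has no native MacMahonesque analog. I would decompose $s_k = \#\{u : 1 \leq u \leq s_k\}$ and split each inner sum according to whether $u$ coincides with some $s_j$ (necessarily $j \leq k$) or falls strictly in a gap between consecutive $s$-values. The coincidence case sums over $j \leq k$ to contribute $k$ times a length-$a$ MacMahonesque function with $v_k$ incremented to $v_k+1$. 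The gap case would insert $u$ as a new part-size (length $a+1$) but lacking a multiplicity, hence not immediately MacMahonesque; to convert it into a valid MacMahonesque combination, I would exploit algebraic identities among the $f_v$'s (arising from the partial-fraction decomposition of $1/(1-q^u)$) together with Abel-type summation across gaps. A weight count in the bracket grading, where $M_{\vec{a}}$ has weight $|\vec{a}|+a$ and $D$ raises weight by exactly $2$, then yields $|\vec{b}| \leq |\vec{a}| + a + 1$.

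The main obstacle is the gap case of part (2): removing the $s_k$ prefactor and accommodating an inserted part-size with no native multiplicity requires delicate rational-function manipulations among the $f_v(q^u)$'s. The reduction would also rely on the product closure from part (1) to absorb any products of $f_v(q^u)$'s that arise during the rewriting, so the two parts are intertwined and must be handled in tandem.
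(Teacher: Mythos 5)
Your part~(1) is essentially correct and amounts to a self-contained re-derivation of the input the paper simply cites, namely the quasi-shuffle formula $\mathcal{U}_{\vec{\alpha}}\,\mathcal{U}_{\vec{\beta}}=\mathcal{U}_{\vec{\alpha}*\vec{\beta}}$ of Bachmann--K\"uhn (Proposition~\ref{ProductProperty}). Your collision identity $f_v f_w=\sum_{u\le v+w+1}c^u_{v,w}f_u$ is exactly the $\diamond$-product, and your bookkeeping --- each collision lowers the length by $1$ and raises the total exponent by at most $1$, so the filtration weight $|\vec{a}|+\ell(\vec{a})$ is subadditive, and $\ell(\vec{a})\ge 1$ gives $|\vec{a}|\le|\vec{\alpha}|+|\vec{\beta}|+a+b-1$ --- is the same count the paper performs. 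The only difference is that you prove the product formula rather than quoting it; that is a legitimate trade.

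Part~(2), however, has a genuine gap, and it sits exactly where you flag it. Writing $s_k=\#\{u:1\le u\le s_k\}$ and peeling off the $k$ coincidences $u=s_j$ leaves the sum $\sum_{\vec{s}}(s_k-k)\,f_{v_k+1}(q^{s_k})\prod_{j\ne k}f_{v_j}(q^{s_j})$: the inserted variable $u$ carries the constant weight $1$, which is not $f_w(q^u)$ for any $w\ge 0$ (note $f_0(q^u)=q^u/(1-q^u)\ne 1$), and the total gap count over all gaps below $s_k$ telescopes back to $s_k-k$. So the decomposition $s_k=k+(s_k-k)$ is circular: you have not removed the prefactor, only renamed it. ``Partial fractions for $1/(1-q^u)$ plus Abel summation'' does not produce the constant function in the $u$-slot as a finite combination of $f_w(q^u)$'s, so as written this is a restatement of the problem, not a proof. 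The fact that $D$ preserves $\mathcal{Z}_q$ and raises the filtration weight by exactly $2$ is precisely Theorem~1.7 of \cite{BachmannKuhn}, which the paper invokes (via Theorem~\ref{thm:BachmannKuhn}); its proof is not an elementary resummation but exploits the comparison of two product structures on the brackets (the partition/conjugation relation). Your final inequality $|\vec{a}|\le|\vec{\alpha}|+a+1$ assumes this weight statement, i.e., assumes the thing to be proved. To close the gap, either cite Theorem~1.7 of \cite{BachmannKuhn} as the paper does, or supply that double-product argument in full.
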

	\begin{remarks}\ \ \newline
		\noindent
		(1) By Theorem~\ref{Convolution} (1) and (2), every convolution product of MacMahonesque functions with polynomial coefficients is  a linear combination of such functions with rational coefficients. 
		
		\noindent 
		(2) As the proof will show, Theorem~\ref{Convolution} is a partition theoretic reformulation of  beautiful results of Bachmann and K\"uhn (see Theorem~1.3 and~1.7 of~\cite{BachmannKuhn}). 
		
	\end{remarks}
	
	\begin{examples}
	The identity $\Psi_3(n)=\Psi_2(n)$ offers examples of linear expressions with constant coefficients obtained from Theorem~\ref{Convolution} (1). As a simpler example of Theorem~\ref{Convolution} (1), we have
		$$\sum_{i+j=n}M_{(1)}(i)\,M_{(1)}(j)={\color{black}\frac{1}{6}M_{(3)}(n) + 2 M_{(1,1)}(n) - \frac{1}{6} M_{(1)}(n).}
		$$
		As an example of Theorem~\ref{Convolution} (2), we have
		\begin{displaymath}
		\begin{split}
		nM_{(1,1)}(n)=\color{black}
		\frac{1}{22}\Bigl(
		-21 M_{(3, 1)}(n)&
		+72M_{(2, 2)}(n) 
		-9 M_{(1, 3)}(n)
		+24M_{(3,0)}(n)\\
		&\color{black} -24 M_{(3, 0, 0)}(n)
		-24 M_{(2, 1, 0)}(n)
		+24 M_{(2, 0, 1)}(n)
		-72M_{(1, 1, 1)}(n)
		\Bigr).
		\end{split}
		\end{displaymath}
		Finally, by iterating Theorem~\ref{Convolution} (2), we can reformulate the expression in Theorem~\ref{FirstExamples} (1) as
		\begin{displaymath}
		\begin{split}
		(n^2-3n+2)M_1(n)-&8 M_2(n)={\color{black}
			\frac{3}{11}\Psi_1(n)},
		\end{split}
		\end{displaymath}
		a linear expression in MacMahonesque functions with constant coefficients.
	\end{examples}

	To obtain these results, we make use of the theory of multiple $q$-zeta values, largely as developed by Bachmann and K\"uhn~\cite{BachmannMA, BachmannKuhn, BachmannLectures}, and the theory of quasimodular forms.
	In Section~\ref{QuasimodularBackground}, we recall facts about quasimodular forms, and we establish conditions (see Theorem~\ref{criterion2}) under which their Fourier expansions detect primes, a result which is of independent interest. In Section~\ref{UaSection}, we use the fact that
	MacMahon's $q$-series $\mathcal{U}_a(q)$ arise naturally as  quasimodular forms (see~\cite{BachmannKuhn, Andrews-Rose, rose}), combined with Theorem~\ref{criterion2}, to prove Theorems~\ref{FirstExamples} and \ref{General_Ua}. 
	
	In contrast,
	the generating series of the generalized MacMahonesque partition functions
	$$
	\mathcal U_{\vec a}(q):=\sum_{n\geq 1} M_{\vec{a}}(n)q^n 
	$$
	are generally not quasimodular. However, they occur naturally
	in the theory of  multiple $q$-zeta values.  Bachmann and K\"uhn show~\cite{BachmannKuhn} that the MacMahonesque series generate an extraordinary differential algebra whose arithmetic properties encode a rich web of identities satisfied by the MacMahonesque generating functions. To make use of quasimodularity,
	for those $\vec{a}$ with odd entries, we construct quasimodular forms  $\mathcal{U}_{\vec{a}}^{\Sym}(q)$ by taking traces over the natural action of the symmetric group~$S_a$ on~$\mathcal{U}_{\vec{a}}(q).$
	In particular, we prove the following theorem (see Theorem~\ref{lem:symsum}).
	
	\begin{theorem} 
	All quasimodular forms are linear combinations of the MacMahonesque $\mathcal{U}_{\vec{a}}^{\Sym}(q)$.
	\end{theorem}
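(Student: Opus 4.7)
The plan is to show that every element of the algebra $\wt M := \Q[E_2, E_4, E_6]$ of quasimodular forms on $\SL_2(\Z)$ is a $\Q$-linear combination of the quasimodular series $\mathcal{U}_{\vec{a}}^{\Sym}(q)$ indexed by vectors $\vec{a}$ of positive odd integers. Since $\wt M$ is a polynomial algebra, it suffices to decompose an arbitrary monomial $E_2^i E_4^j E_6^k$.

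The base case is depth one: the series $\mathcal{U}_{(2k-1)}(q) = \sum_{n \geq 1} \sigma_{2k-1}(n)q^n$ coincides (up to a constant term and an explicit rational scalar) with the Eisenstein series $E_{2k}$, and since $S_1$ is trivial we have $\mathcal{U}_{(2k-1)} = \mathcal{U}_{(2k-1)}^{\Sym}$. Hence $E_2, E_4, E_6$ each lie in the $\Q$-span of the symmetrized MacMahonesque series corresponding to $\vec{a} \in \{(1), (3), (5)\}$.

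For general monomials, I would iterate Theorem~\ref{Convolution}(1), which rewrites any product $\mathcal{U}_{\vec{\alpha}}(q)\,\mathcal{U}_{\vec{\beta}}(q)$ as a $\Q$-linear combination of single MacMahonesque series $\mathcal{U}_{\vec{a}}(q)$. Applied to any polynomial $P$ in $E_2, E_4, E_6$, this yields a decomposition $P(E_2, E_4, E_6) = \sum_{\vec{a}} c_{\vec{a}}\, \mathcal{U}_{\vec{a}}(q)$. Because the left-hand side is a single function independent of any indexing, it is fixed under the natural $S_a$-action on index vectors, so averaging both sides converts the right-hand side into the symmetrized form $\sum_{\vec{a}} c_{\vec{a}}\, \mathcal{U}_{\vec{a}}^{\Sym}(q)$.

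The main obstacle is to verify that the $\vec{a}$'s giving nontrivial contributions may be taken to have all odd entries, which is precisely the regime in which $\mathcal{U}_{\vec{a}}^{\Sym}$ was shown to be quasimodular. This rests on a parity analysis: the weight $|\vec{a}|+a$ of each surviving term must equal the (even) weight of the monomial, forcing the number of even entries in $\vec{a}$ to be even. A closer examination of the explicit stuffle decomposition underlying Theorem~\ref{Convolution}(1), together with the bi-bracket formalism developed in Bachmann--K\"uhn~\cite{BachmannKuhn} and Chapter~5 of Bachmann's thesis~\cite{BachmannMA}, should then permit one either to cancel the even-entry contributions or to re-express them, via further iterated applications of Theorem~\ref{Convolution}, as linear combinations of all-odd-entry vectors. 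Carrying out this parity reduction rigorously is the technical crux of the proof.
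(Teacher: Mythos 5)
Your overall strategy---reduce to monomials $E_2^iE_4^jE_6^k$, realize each Eisenstein series as a depth-one series $\mathcal{U}_{(2k-1)}$, expand the product into MacMahonesque series via the quasi-shuffle, and then symmetrize---can in principle be completed, but the two steps you lean on hardest are left unjustified, and the justifications you do offer are not valid. First, the symmetrization step: you argue that because the left-hand side ``is a single function independent of any indexing,'' the right-hand side may be averaged over $S_a$. This does not follow. The series $\mathcal{U}_{\vec{a}}$ satisfy many linear relations (this is a central feature of the Bachmann--K\"uhn theory), so a given function admits many non-symmetric representations, and permuting the indices of one particular representation is not an operation induced by anything on the function itself. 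What is true, and what you actually need, is that the quasi-shuffle product $z_{v_1}*\cdots*z_{v_a}$ of single letters is itself a symmetric element of $\Q\langle A\rangle$: every word in it arises from a set partition of $\{1,\dots,a\}$, a $\diamond$-product within each block, and a sum over all orderings of the blocks. This is in substance Hoffman's symmetric-sum theorem \cite[Theorem~13]{Hoffman}, i.e.\ exactly the identity \eqref{Usymformula} on which the paper's proof rests; it must be invoked, not inferred from the left-hand side being ``a single function.''

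Second, the parity reduction, which you correctly identify as the crux but do not carry out---and the heuristic you propose for it fails. The weight $|\vec{a}|+\ell(\vec{a})$ only \emph{filters} $\mathcal{Z}_q$, it does not grade it, so strictly lower-weight terms appear in every product and no parity count on the weight can force the entries of $\vec{a}$ to be odd (nor would ``an even number of even entries'' suffice: quasimodularity of $\mathcal{U}_{\vec{a}}^{\Sym}$ is only established for $\vec{a}$ with \emph{all} entries odd). The actual mechanism is local to the $\diamond$-product: for odd $i,j$ one has $c_{i,j,m}=0$ for odd $m$, since $B_{i+j-m}=0$ when $i+j-m$ is odd and $\geq 3$, while for $i+j-m=1$ the binomial coefficients $\binom{i}{m+1},\binom{j}{m+1}$ vanish. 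Hence $\diamond$-products of odd letters are supported on odd letters and the entire expansion stays in the all-odd regime. The paper proves precisely this in part (1) of Theorem~\ref{lem:symsum}, and then obtains the spanning statement in the direction opposite to yours: it reads \eqref{Usymformula} as expressing $\mathcal{U}_{\vec{a}}^{\Sym}$ as a polynomial in Eisenstein series with a controlled leading term, uses the fact that modular forms are spanned by products of at most two Eisenstein series, and inducts on the depth (the degree in $G_2$); this also yields the sharper generating set of Theorem~\ref{lem:symsum}(3). Your bottom-up route would avoid the depth induction, but only once both of the above points are supplied. (One last small point: the constant function is a weight-$0$ quasimodular form while every $\mathcal{U}_{\vec{a}}^{\Sym}$ has vanishing constant term, so, as in Theorem~\ref{lem:symsum}(3), the constant $1$ must be adjoined to the spanning set.)
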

	
	Theorem~\ref{GeneralDetection} then follows by applying Theorem~\ref{criterion2}. These details are carried out in Section~\ref{GeneralizedUaSection}.
	
	\section*{Acknowledgements}
	\noindent
	The authors would like to thank the organizers of the International Conference on
	Modular Forms and $q$-Series at the University of Cologne, where this project emerged. The authors would also like to thank Tewodros Amdeberhan, George Andrews, Henrik Bachmann, Yifeng Huang, Hasan Saad, Robert Schneider, and Ajit Singh for helpful comments which improved the manuscript. The first and second authors thank the support of the European Research Council (ERC) under the European Union's Horizon 2020 research and innovation programme (grant agreement No. 101001179) and by the SFB/TRR 191 ``Symplectic Structure in Geometry, Algebra and Dynamics'', funded by the DFG (Projektnummer 281071066 TRR 191). The third author thanks the Thomas Jefferson Fund and grants from the NSF
	(DMS-2002265 and DMS-2055118).

	\section{Quasimodular forms that detect primes}\label{QuasimodularBackground}
	Central to all of our results is a theorem (see Theorem~\ref{criterion2}) that characterizes quasimodular forms  in the space consisting of linear combinations of derivatives of Eisenstein series whose coefficients are zero at primes and positive at composite $n\geq 2$.
	
	\subsection{Preliminaries about quasimodular forms}
	To state and then prove this result, we require several important preliminary facts.
	For $\tau$ in the upper-half of the complex plane, set $q:=e^{2\pi i \tau}$. For integers $k \geq 1,$ we consider the {\it Bernoulli numbers} $B_k$ and the {\it Eisenstein series}
	\begin{equation}\label{Fourier}
	G_k(\tau) := - \dfrac{B_k}{2k} + \sum_{n \geq 1} \sigma_{k-1}(n) q^n,
	\end{equation}
	where   $ \sigma_{k-1}(n) := \sum_{d|n} d^{k-1}.$
	It is well-known (for example, see~\cite{Ono}) that $G_k$ is quasimodular if and only if $k$ is even, and modular if $k\geq 4$ and even. Furthermore, the algebra of quasimodular forms, which we denote by $\widetilde{M}$, is generated by $G_2, G_4$, and $G_6$. We also denote by $\widetilde{M}_k$ the subspace of $\widetilde{M}$ of forms with weight $k$, and $\widetilde{M}_{\leq k}$ the space of all quasimodular forms of mixed weight $\leq k$.
	
	We require the important $q$-differential operator
	\begin{align}
	D := \frac{1}{2\pi i} \dfrac{d}{d\tau} = q \dfrac{d}{dq}.
	\end{align}
	For $k \geq 0$, we observe that $D^k : q^n \mapsto n^k q^n$.
	Ramanujan famously proved the identities (for example, see \cite[Section 2.3]{Ono})\footnote{These identities are often stated for the normalized Eisenstein series $E_k := 1 + \dots$.}
	\begin{align*}
	DG_2 = - 2G_2^2 + \dfrac{5}{6} G_4, \ \ \ D G_4 = -8 G_2 G_4 + \dfrac{7}{10} G_6, \ \ \ DG_6 = -12 G_2 G_6 + \dfrac{400}{7} G_4^2.
	\end{align*}
	These identities show that $D$ defines a map on the algebra of quasimodular forms, which increases weights by $2$. Namely, we have that
	$D:\widetilde{M}_{k}\to \widetilde{M}_{k+2}.$
	
	\subsection{Some prime-detecting quasimodular forms}
	Using elementary properties of~$D,$ it is straightforward to produce quasimodular forms whose Fourier coefficients detect the primes.
	
	\begin{lemma}\label{criterion1} 
		Given 
		non-negative odd integers $k,\ell$ with $\ell>k$, consider the quasimodular forms
		\begin{align*}
		f_{k,\ell} := \bigl( D^\ell + 1 \bigr) G_{k+1} - \bigl( D^k + 1 \bigr) G_{\ell+1}.
		\end{align*}
		For $n \geq 2$, the $n$th Fourier coefficient of $f_{k,\ell}$ vanishes if and only if $n$ is prime. Furthermore, all of the coefficients of $f_{k,\ell}$ are non-negative.
		{\color{red}
			
		}
	\end{lemma}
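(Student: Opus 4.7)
The plan is to directly compute the Fourier expansion of $f_{k,\ell}$ and then analyze its $n$th coefficient in a combinatorial way. Since $D : q^n \mapsto n\,q^n$, the $n$th coefficient of $(D^j+1)\,G_{m+1}$ for $n \geq 1$ is $(n^j+1)\,\sigma_m(n)$ (the constant term being handled separately). So the $n$th coefficient of $f_{k,\ell}$ is
\[ a_n := (n^\ell+1)\,\sigma_k(n) - (n^k+1)\,\sigma_\ell(n). \]
An immediate check shows $a_1 = 2\cdot 1 - 2\cdot 1 = 0$, and for a prime $p$, substituting $\sigma_j(p) = 1+p^j$ gives $a_p = (p^\ell+1)(1+p^k) - (p^k+1)(1+p^\ell) = 0$. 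This establishes the ``if'' direction and handles $n=1$.

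For composite $n \geq 4$, the idea is to isolate the contribution of the proper divisors. Writing $\sigma_j(n) = (n^j + 1) + \sigma_j'(n)$, where $\sigma_j'(n) := \sum_{1 < d < n,\ d\mid n} d^j$, the trivial cancellation yields
\[ a_n = (n^\ell + 1)\,\sigma_k'(n) - (n^k + 1)\,\sigma_\ell'(n) = \sum_{\substack{1 < d < n \\ d \mid n}} \left[(n^\ell+1) d^k - (n^k+1) d^\ell \right]. \]
I then group the proper divisors into pairs $\{d,\, n/d\}$, together with the singleton $d=\sqrt{n}$ in the perfect-square case. The main work is to show each pair contributes positively. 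For a pair with $d < n/d$, a direct algebraic manipulation should yield the factorization
\[ (n^\ell+1)\bigl(d^k + (n/d)^k\bigr) - (n^k+1)\bigl(d^\ell + (n/d)^\ell\bigr) = \frac{(d^{\ell+k}-1)\,n^k(n^{\ell-k} - d^{\ell-k}) + (d^{\ell-k}-1)(n^{\ell+k}-d^{\ell+k})}{d^\ell}, \]
while in the singleton case $n=d^2$, the contribution reduces to $d^k(d^{\ell-k}-1)(d^{\ell+k}-1)$. In both expressions, since $d > 1$, $\ell > k$, and $d < n$ (and in particular $\ell - k$ is a positive even integer because $k$ and $\ell$ are both odd), every factor is strictly positive. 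Summing over all pairs gives $a_n > 0$, which both proves the ``only if'' direction and establishes non-negativity of the positive-index Fourier coefficients.

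The main obstacle is spotting and verifying the identity displayed above: the brute expansion of the left-hand side produces eight monomials in $n, d$, and the key trick is to collect them as two products with a common sign, each manifestly positive under the hypotheses on $d, k, \ell$. Once that factorization is in hand, everything else (the Fourier expansion, the identity $\sigma_j(p)=1+p^j$, and the vanishing at primes) is routine, and the argument is uniform in the parity-constrained parameters $k < \ell$.
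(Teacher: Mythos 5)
Your argument is correct and shares its skeleton with the paper's proof: both reduce the $n$th Fourier coefficient to the sum $\sum_{1<d<n,\ d\mid n}\left[(n^\ell+1)d^k-(n^k+1)d^\ell\right]$ over proper divisors, so that vanishing at primes is just the statement that this sum is empty, and the real content is positivity for composite $n$. You handle that positivity step differently. The paper bounds each summand individually: since any proper divisor satisfies $d\le n/2$, one gets $(n^\ell+1)d^k-(n^k+1)d^\ell\ge d^k\left(1+n^\ell\left(1-2^{-(\ell-k-1)}\right)\right)\ge 1$, so every single term of the divisor sum is already positive. You instead pair $d$ with $n/d$ and prove an exact factorization of each paired contribution (plus the singleton $d=\sqrt{n}$) into manifestly positive factors; I expanded both of your claimed identities and they check out, and positivity of each factor needs only $d>1$, $n>d$, and $\ell>k$ (the evenness of $\ell-k$ that you invoke is not actually needed). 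Both routes are valid: the paper's inequality is shorter and gives the slightly stronger term-by-term statement, while yours replaces an inequality by an identity at the cost of the eight-monomial expansion you flag as the main obstacle. One caveat applying to your write-up and the paper's alike: the constant term of $f_{k,\ell}$ equals $-B_{k+1}/(2(k+1))+B_{\ell+1}/(2(\ell+1))$, which is negative already for $(k,\ell)=(1,3)$, so the claim that \emph{all} coefficients are non-negative must be read as referring to the coefficients of $q^n$ with $n\ge 1$, which is what both arguments actually establish.
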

	
	\begin{remarks}\ \ \newline
		\noindent
		(1) The assumption in Lemma~\ref{criterion1} that $k$ and $\ell$ are odd is imposed to ensure $f_{k,\ell}$ are quasimodular forms. In particular, the vanishing property is true for any non-negative integers $k$ and $\ell$.
		
		\noindent
		(2) Lemma~\ref{criterion1} appears in an unpublished note by Leli\`evre (see Lemma~2 of~\cite{Lelievre}). For completeness we offer a streamlined proof here.
	\end{remarks}
	
	\begin{proof} By inspection, the coefficient of $q^1$ vanishes. Therefore, without loss of generality we may assume that
		 $n \geq 2$ is an integer. Thanks to (\ref{Fourier}), the $n$th coefficient of $f_{k,\ell}$ is given by
		\begin{align*}
		\sum_{\substack{1 < d < n \\ d|n}} \left[ \lp 1 + n^\ell \rp d^k - \lp 1 + n^k \rp d^\ell \right].
		\end{align*}
		This sum is empty and therefore vanishing if $n$ is prime, so we assume now that $n$ is composite.   Since $d \leq \frac n2$ must hold,
		we then have
		\begin{align*}
		\lp 1 + n^\ell \rp d^k - \lp 1 + n^k \rp d^\ell &\geq d^k\lp 1 + n^\ell - \lp\dfrac{n}{2}\rp^{\ell-k} - n^k\lp \dfrac{n}{2} \rp^{\ell-k} \rp \\ &\geq d^k\lp 1 + n^\ell \lp 1 - \dfrac{1}{2^{\ell - k - 1}} \rp \rp \geq 1.
		\end{align*}
		Thus, the coefficients of $f_{k,\ell}$ are all non-negative.
	\end{proof}
	
	We now turn to the problem of determining those quasimodular forms whose Fourier expansions detect primes. To this end, we let $\mathcal{E}$ denote the \emph{quasimodular Eisenstein space}, the subspace of quasimodular forms generated additively by the even weight Eisenstein series and their derivatives. We then let $\Omega$ denote the {\it prime-detecting quasimodular forms}, where a quasimodular form
	$$
	f =\sum_{n\geq 0} b_n(f)q^n
	$$
	is  prime-detecting if for positive $n$ we have $b_n(f)\geq 0$, and for $n\geq 2$ vanish  if and only if $n$ is prime.
	
	Based on numerical evidence, we expect that the property of vanishing of the Fourier coefficients at primes is not satisfied by quasimodular forms which are not in the quasimodular Eisenstein space.
	\begin{conjecture} \label{conj:criterion}
		We have
		$\Omega = \mathcal{E} \cap \Omega.$
	\end{conjecture}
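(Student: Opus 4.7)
The plan is to analyze the standard decomposition of a quasimodular form $f \in \Omega$ and to show that its cuspidal component is forced to vanish. By the structure theorem $\widetilde{M} = \C[G_2, G_4, G_6]$ together with the Eisenstein/cuspidal splitting $M_k = \mathcal{E}_k \oplus S_k$ in each weight, any quasimodular form admits a unique representation $f = f_E + f_C$, where $f_E \in \mathcal{E}$ and $f_C = \sum_{i,j} c_{i,j} D^j g_i$ is a linear combination of $D$-derivatives of normalized Hecke cuspidal eigenforms $g_i$ of weights $k_i$. The conjecture is equivalent to showing that $f \in \Omega$ forces $f_C = 0$. Since $b_p(D^j G_k) = p^j \sigma_{k-1}(p) = p^j(1 + p^{k-1})$ and $b_p(D^j g_i) = p^j a_p(g_i)$, the vanishing $b_p(f) = 0$ at every prime $p$ translates into
\[ Q(p) + \sum_i a_p(g_i)\, P_i(p) = 0 \quad \text{for every prime } p, \]
where $Q \in \Q[x]$ depends only on $f_E$ and $P_i(x) := \sum_j c_{i,j} x^j$ encodes the $g_i$-isotypic part of $f_C$.

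The heart of the argument is to show that this identity, holding at every prime simultaneously, forces each $P_i = 0$ (and hence $Q = 0$). I would argue this via the statistical behavior of Hecke eigenvalues. By Ramanujan--Petersson one has $|a_p(g_i)| \leq 2 p^{(k_i-1)/2}$, so each term $a_p(g_i)\,P_i(p)$ has a prescribed growth rate. Combined with Sato--Tate equidistribution (Barnet-Lamb--Geraghty--Harris--Taylor, for non-CM forms) and Hecke's classical result in the CM case, the normalized sequences $(a_p(g_i)/p^{(k_i-1)/2})_p$ are equidistributed on $[-2,2]$ against an explicit measure. Using joint equidistribution for distinct newforms, which follows from the analytic properties of Rankin--Selberg $L$-functions combined with strong multiplicity one, the sequences $(a_p(g_i))_p$ across distinct $i$ should be sufficiently ``independent'' that no $\Q[x]$-linear combination of them can equal a fixed polynomial $Q(p)$ at every prime unless all the $P_i$ vanish. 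At that point $Q$ itself must vanish identically, giving $f_C = 0$ and $f = f_E \in \mathcal{E}$.

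The main obstacle will be making this final independence step fully rigorous across arbitrary finite collections of cuspidal eigenforms of mixed weights, including mixtures of CM and non-CM forms, since imposing an exact polynomial identity at \emph{every} prime is strictly stronger than any asymptotic or density statement. Indeed, even the single-form case contains Lehmer-type nonvanishing statements: if $f_C$ were a scalar multiple of $\Delta(\tau)$, the conjecture would require $\tau(p) \neq 0$ for every prime $p$, which is famously open. A potentially more elementary route, which I would attempt first, is to exploit the full strength of $f \in \Omega$ by combining the vanishing at primes with the non-negativity $b_n(f) \geq 0$ at highly composite $n$ (say $n = 2^k$ or primorials), where Deligne's bound makes $b_n(f_C)$ negligible compared to the polynomial growth of $b_n(f_E)$. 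The rigidity imposed by requiring both constraints simultaneously at every $n$ may be enough to eliminate $f_C$ without invoking the full machinery of joint equidistribution.
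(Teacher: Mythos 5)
The first thing to note is that this statement is labeled a \emph{conjecture} in the paper: the authors offer no proof, only the remark that it is supported by numerical evidence. So there is no proof of record to compare against, and the only question is whether your proposal actually settles the conjecture. It does not. Your setup is sound: decomposing $f = f_E + f_C$ with $f_E \in \mathcal{E}$ and $f_C$ a combination of $D$-derivatives of level-one Hecke eigenforms, and translating vanishing at primes into $Q(p) + \sum_i a_p(g_i) P_i(p) = 0$ for all $p$, is the natural reduction. But the entire content of the conjecture is concentrated in the step you describe with ``should be sufficiently independent.'' To make it rigorous you would need joint equidistribution for arbitrary finite families of level-one eigenforms, and then a stratification of the terms by the growth exponents $\deg P_i + (k_i-1)/2$ --- which can coincide across eigenforms of different weights, so several $a_p(g_i)$ contribute at the same scale and must be shown jointly non-degenerate --- before inducting downward. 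None of this is carried out, and you yourself flag it as the obstacle; a plan whose central step is acknowledged to be open is not a proof.

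Two further points. First, your Lehmer remark runs in the wrong direction: if $f_C = c\Delta$ with $c \neq 0$, membership in $\Omega$ would require $c\,\tau(p) + Q(p) = 0$ at \emph{every} prime, and to rule this out one only needs a \emph{single} prime where it fails --- far weaker than Lehmer's conjecture, and already a consequence of sign changes or of Sato--Tate for $\Delta$ (since $\tau(p)/p^{11/2}$ does not converge, while $Q(p)/(c\,p^{11/2})$ does). So the obstruction you cite is not actually there, but neither does removing it complete the argument. Second, at level one there are no CM newforms, so the CM/non-CM case distinction you worry about never arises. Your fallback idea --- using non-negativity at highly composite $n$ together with Deligne's bound --- does dispose of the purely cuspidal case ($f_E = 0$ forces sign changes, contradicting $b_n(f) \geq 0$), but in the mixed case the Eisenstein part dominates at composites and the positivity constraint is essentially free, so the vanishing at primes remains the binding and unresolved condition.
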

	
	\subsection{Characterizing $\mathcal{E}\cap \Omega$}
	We  give an explicit description of $\mathcal{E}\cap \Omega$ (see Theorem~\ref{criterion2}). To this end,
	for even $k\geq 6$, we define the distinguished quasimodular forms of weight $k$
	\begin{align} \label{H definition}
	H_k := \sum_{n\geq 0} b_{n}(H_k) q^n :=
	\begin{cases}
	\frac{1}{6}\left((D^2-D+1)G_2-G_4\right) &\ \ \ {\text {\rm if}}\ k=6, \\
	\frac{1}{24}(-D^2 G_{k-6} + (D^2+1) G_{k-4} - G_{k-2}) &\ \ \ {\text {\rm if}}\ k\geq 8.
	\end{cases}
	\end{align}
	The quasimodular forms $H_k$ give rise to all quasimodular forms which are linear combinations of derivatives of Eisenstein series that are prime-detecting.
	
	\begin{theorem} \label{criterion2}
		For all $n \geq 0$ and $k\geq 6$, we have $D^n H_k\in \Omega$. Conversely, if $f\in \mathcal{E}\cap \Omega$, then $f$ is a linear combination of the forms  $D^n H_k$ for $n\geq 0$ and $k\geq 6$.
	\end{theorem}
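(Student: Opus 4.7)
The plan has two parts. For the forward direction, I compute the $m$th Fourier coefficient of $H_k$ directly from~\eqref{Fourier} and~\eqref{H definition}. For $k \geq 8$, the elementary identity $-m^2 + (m^2 + 1)d^2 - d^4 = (d^2 - 1)(m^2 - d^2)$ converts the coefficient to the divisor sum
\begin{equation*}
b_m(H_k) = \frac{1}{24}\sum_{d \mid m} d^{k-7}(d^2 - 1)(m^2 - d^2).
\end{equation*}
Every summand is non-negative; the terms at $d = 1$ and $d = m$ vanish, while any proper divisor $1 < d < m$ contributes strictly positively. Hence $b_m(H_k) = 0$ iff $m = 1$ or $m$ is prime, and $b_m(H_k) > 0$ otherwise. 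The $k = 6$ case is analogous: $b_m(H_6) = \tfrac{1}{6}\sum_{d \mid m} d(m^2 - m + 1 - d^2)$, in which the $d = 1$ and $d = m$ contributions cancel and for $1 < d \leq m/2$ the bound $d^2 < m^2 - m + 1$ gives positivity. Applying $D^n$ multiplies $b_m$ by $m^n \geq 0$, preserving the vanishing and sign structure, so $D^n H_k \in \Omega$.

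For the converse, write $f = \sum_{k, j} c_{k, j} D^j G_k \in \mathcal{E}$ as a finite sum. Since $\sigma_{k-1}(p) = 1 + p^{k-1}$ for any prime $p$, the $p$th coefficient equals $b_p(f) = P(p)$ where $P(x) := \sum c_{k, j} x^j(1 + x^{k-1}) \in \mathbb{Q}[x]$. If $f \in \Omega$, then $P$ vanishes at every prime, hence at infinitely many points, so $P \equiv 0$. Let $\mathcal{K} \subseteq \mathcal{E}$ denote the kernel of $f \mapsto P$; then $\mathcal{E} \cap \Omega \subseteq \mathcal{K}$, and it remains to prove $\mathcal{K} = \mathrm{span}\{D^n H_k : n \geq 0, k \geq 6 \text{ even}\}$.

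To describe $\mathcal{K}$, the telescoping identity $\sum_{i=0}^{k-2}(-1)^i x^{j+i}(1 + x) = x^j(1 + x^{k-1})$ (valid for $k$ even) yields the elementary relations
\begin{equation*}
\mathcal{R}_{k, j} := D^j G_k - \sum_{i=0}^{k-2}(-1)^i D^{j+i} G_2 \in \mathcal{K}, \qquad k \geq 4 \text{ even}, \ j \geq 0.
\end{equation*}
These are linearly independent and span $\mathcal{K}$: modulo them every $D^j G_k$ reduces to a combination of the $D^i G_2$, and $\{D^i G_2\}_{i \geq 0}$ maps injectively into $\mathbb{Q}[x]$. Expanding~\eqref{H definition} gives $D^n H_6 = -\tfrac{1}{6}\mathcal{R}_{4, n}$ and, for $k \geq 8$, the identity
\begin{equation*}
24 D^n H_k = \mathcal{R}_{k-4, n} + \mathcal{R}_{k-4, n+2} - \mathcal{R}_{k-2, n} - \mathcal{R}_{k-6, n+2},
\end{equation*}
with the convention $\mathcal{R}_{2, *} \equiv 0$. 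Rearranging as $\mathcal{R}_{k, n} = \mathcal{R}_{k-2, n} + \mathcal{R}_{k-2, n+2} - \mathcal{R}_{k-4, n+2} - 24 D^n H_{k+2}$ sets up an induction on the even integer $k \geq 4$. The base case is $\mathcal{R}_{4, n} = -6 D^n H_6$; the inductive step expresses $\mathcal{R}_{k, n}$ as a combination of $\mathcal{R}$'s of smaller $k$-index and $D^n H_{k+2}$, all of which lie in $\mathrm{span}\{D^m H_\ell\}$. Hence $\mathcal{K} = \mathrm{span}\{D^n H_k\}$, finishing the proof.

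The main technical obstacle is discovering and verifying the identity $24 D^n H_k = \mathcal{R}_{k-4, n} + \mathcal{R}_{k-4, n+2} - \mathcal{R}_{k-2, n} - \mathcal{R}_{k-6, n+2}$, which essentially motivates the otherwise unusual definition of $H_k$ in~\eqref{H definition}; once written down, it follows from a routine term-by-term comparison.
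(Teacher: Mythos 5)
Your proof is correct, and while your forward direction essentially matches the paper's (the same factorization $24\,b_n(H_k)=\sum_{d\mid n}d^{k-7}(d^2-1)(n^2-d^2)$ for $k\geq 8$; for $k=6$ you argue directly from $b_n(H_6)=\tfrac16\sum_{d\mid n}d(n^2-n+1-d^2)$, whereas the paper reduces to \cref{criterion1} via $(D+1)H_6=\tfrac16 f_{1,3}$ --- a cosmetic difference), your converse takes a genuinely different route. The paper fixes a weight bound $k$, encodes vanishing at primes as the polynomial identity $\sum_{\ell}p_{2\ell}(x)(x^{2\ell-1}+1)=0$, computes the rank of the resulting linear system to be exactly $k-1$ (a row-echelon argument plus divisibility by $x+1$), and matches the resulting dimension $\tfrac18(k-2)(k-4)$ against the dimension of the span of the $D^nH_\ell$ of weight $\leq k$ from \cref{lem:lindepH}. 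You instead exhibit an explicit spanning set of the kernel --- the telescoped relations $\mathcal{R}_{k,j}$, whose spanning property follows because modulo them everything reduces to $\operatorname{span}\{D^iG_2\}$, which meets the kernel trivially --- and then convert between the two spanning sets via $24D^nH_k=\mathcal{R}_{k-4,n}+\mathcal{R}_{k-4,n+2}-\mathcal{R}_{k-2,n}-\mathcal{R}_{k-6,n+2}$ and an induction on $k$; I have checked these identities and they hold. Your route is more constructive and avoids both the rank computation and the dimension count (it does not even need linear independence of the $\mathcal{R}_{k,j}$), while the paper's count has the side benefit of yielding the exact dimension formula that is reused in the proof of \cref{GeneralDetection}(2). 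One small point worth making explicit: the map $f\mapsto P$ is well defined on $\mathcal{E}$ (so that ``the kernel'' makes sense) because $P(p)=b_p(f)$ at every prime $p$ and a polynomial is determined by its values at infinitely many points; with that noted, the argument is complete.
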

	
	Before we prove this theorem, 
	we consider two additional results on properties of these quasimodular forms $H_k$. 
	
	\begin{lemma}
		For $k\geq 6$ even, the following are true:\\
		\noindent
		\noindent
		(1) We have that $b_1(H_k)=0.$\\
		(2) The Fourier coefficients $b_{n}(H_k)$ for $n\geq 2$ are integers.\\
		\noindent
		(3) The  Fourier coefficients $b_{n}(H_k),$ where $n\geq 2,$ generate $\Z$ as a ring. 
	\end{lemma}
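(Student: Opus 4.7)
Using $Dq^n = nq^n$ and the Fourier expansion $G_k = -B_k/(2k) + \sum_{n\geq 1}\sigma_{k-1}(n)q^n$, the definitions of $H_k$ yield, for $n\geq 1$,
\[
b_n(H_6) = \tfrac{1}{6}\bigl[(n^2-n+1)\sigma_1(n) - \sigma_3(n)\bigr], \quad b_n(H_k) = \tfrac{1}{24}\bigl[-n^2\sigma_{k-7}(n)+(n^2+1)\sigma_{k-5}(n)-\sigma_{k-3}(n)\bigr]
\]
for $k\geq 8$. Statement (1) is then immediate: setting $n=1$ and using $\sigma_j(1)=1$ gives $(1-1+1)\cdot 1 - 1 = 0$ for $k=6$ and $-1 + 2 - 1 = 0$ for $k\geq 8$.

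For (2) with $k=6$, I would invoke the congruence $d^3\equiv d\pmod 6$ (from $d^3-d=(d-1)d(d+1)$) to obtain $\sigma_3(n)\equiv\sigma_1(n)\pmod 6$, which reduces the numerator to $n(n-1)\sigma_1(n)\pmod 6$. Since $n(n-1)$ is even, it remains to show $3\mid n(n-1)\sigma_1(n)$. The only nontrivial case is $n\equiv 2\pmod 3$: pairing each divisor $d$ of $n$ with $n/d$ gives $d\cdot(n/d)\equiv 2\pmod 3$, forcing $d + n/d\equiv 0\pmod 3$; a fixed point would need $n=d^2$, impossible since $2$ is a non-square mod $3$. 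Hence $3\mid\sigma_1(n)$. For $k\geq 10$, the three indices $k-7,k-5,k-3$ are odd and $\geq 3$, and I would show $d^m\equiv d^{m'}\pmod{24}$ for all $d\geq 1$ and all odd $m,m'\geq 3$ by CRT: Fermat gives $d^m\equiv d\equiv d^{m'}\pmod 3$; for $d$ odd, $d^2\equiv 1\pmod 8$ yields $d^m\equiv d\equiv d^{m'}$, while for $d$ even both $d^m$ and $d^{m'}$ vanish mod~$8$. Consequently $\sigma_{k-7}\equiv\sigma_{k-5}\equiv\sigma_{k-3}\pmod{24}$, and since $-n^2 + (n^2+1) - 1 = 0$, the numerator is divisible by~$24$.

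The main obstacle is the remaining case $k=8$, where the small index $m=k-7=1$ breaks the previous congruence whenever $d$ is even. Using $\sigma_5(n)\equiv\sigma_3(n)\pmod{24}$, the numerator becomes $n^2(\sigma_3(n)-\sigma_1(n))\pmod{24}$. Here $d^3-d\equiv 0\pmod{24}$ for $d$ odd, but for $d$ even we only have $d^3-d\equiv -d\pmod 8$ (although $\equiv 0\pmod 3$). The required divisibility therefore reduces to the claim $n^2\sum_{d\mid n,\ d\text{ even}}d\equiv 0\pmod 8$, which I would settle by a case analysis on $n$ mod $4$: if $n$ is odd the sum is empty; if $4\mid n$ then $n^2$ is already divisible by $16$; and if $n=2m$ with $m$ odd, the even divisors of $n$ are exactly $\{2d : d\mid m\}$, so the sum equals $2\sigma_1(m)$ and $n^2\cdot 2\sigma_1(m) = 8m^2\sigma_1(m)$ is divisible by~$8$.

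Finally, for part (3), I would simply display two coprime values. Using $\sigma_1(4)=7,\sigma_3(4)=73,\sigma_1(6)=12,\sigma_3(6)=252$, the formula above gives $b_4(H_6)=3$ and $b_6(H_6)=20$. Since $\gcd(3,20)=1$, integer combinations of the $b_n(H_k)$ already produce $1$, so the subring of $\Z$ they generate is all of $\Z$.
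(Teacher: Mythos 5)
Your parts (1) and (2) are correct, and your proof of (2) takes a genuinely different route from the paper's. Where the paper multiplies by $(n+1)$ and combines Fermat's little theorem with multiplicativity of $\sigma_j$ for the $3$-divisibility, and then writes $n=2^a m$ for a somewhat involved analysis modulo $8$, you instead use the uniform congruence $d^m\equiv d^{m'}\pmod{24}$ for odd $m,m'\geq 3$ to dispose of all $k\geq 10$ at once, leaving only the boundary cases $k=6$ and $k=8$; these you settle with short direct arguments (the divisor-pairing proof that $3\mid\sigma_1(n)$ when $n\equiv 2\pmod 3$, and the case split on $n$ modulo $4$ for $n^2\sum_{d\mid n,\ 2\mid d}d\equiv 0\pmod 8$). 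I checked these steps and they are sound; this is arguably cleaner than the published argument.

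Part (3), however, has a genuine gap: the assertion is for \emph{every} even $k\geq 6$, and you only exhibit coprime coefficients for $k=6$ (namely $b_4(H_6)=3$ and $b_6(H_6)=20$). You say nothing for $k\geq 8$, whereas the paper devotes most of its proof of (3) to exactly this case, showing that the greatest common divisor $g$ of the integers $24\,b_n(H_k)$ over all $n\geq 2$ is exactly $24$ by ruling out $p\mid g$ for primes $p\geq 5$, $9\mid g$, and $16\mid g$ with suitably chosen $n$. Your strategy of displaying two coprime values can in fact be made uniform in $k$: from the factorization $24\,b_n(H_k)=\sum_{1<d<n,\ d\mid n} d^{k-7}(d^2-1)(n^2-d^2)$ one computes $b_4(H_k)=3\cdot 2^{k-8}$ and $b_6(H_k)=2^{k-5}+3^{k-5}$ for $k\geq 8$, and the latter is odd and congruent to $(-1)^{k-5}\equiv -1\pmod 3$ since $k-5$ is odd, so the two are coprime. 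But as written your proposal does not prove (3) for any $k\geq 8$, and that case cannot be dismissed as a routine repetition of the $k=6$ computation.
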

	
	\begin{proof} Claim (1) is clear by inspection. Therefore, we now assume that $n \geq 2$ is an integer. Noting that $b_4(H_6) = 3$ and $b_6(H_6) = 20$ are relatively prime, we see that~(3) holds for~$H_6$ assuming that~(2) holds. To show~(2) for~$H_6$, we must prove that
	$$(n^2-n+1) \sigma_1(n) - \sigma_3(n)\equiv 0\pmod{6}.$$
	  Evenness is trivial, and so we consider this expression modulo~3. If $n \not \equiv 2 \pmod{3}$, then after multiplying by $\lp n+1 \rp$, the 3-divisibility follows from 
		\begin{align*}
		\lp n^3 + 1 \rp \sigma_1(n) - \lp n+1 \rp \sigma_3(n) \equiv 0 \pmod{3},
		\end{align*}
		which is an easy consequence of Fermat's little theorem. If $n \equiv 2 \pmod{3}$, then there is a prime $p \equiv 2 \pmod{3}$ dividing $n$ with odd multiplicity $2m+1$, and then 3-divisibility follows from multiplicativity of divisor sums and $\sigma_{2j+1}\lp p^{2m+1} \rp \equiv 0 \pmod{3}$. Thus, both (2) and (3) hold for~$H_6$.
		
		We now consider even $k\geq 8.$ To prove (2), we must prove that
		\begin{align*}
		- n^2 \sigma_{k-7}(n) + \lp n^2 + 1 \rp \sigma_{k-5}(n) - \sigma_{k-3}(n) \equiv 0 \pmod{24}.
		\end{align*}
		It is straightforward to see this congruence modulo 3 from the fact that $\sigma_{j+2}(n) \equiv \sigma_j(n) \pmod{3}$ for any $j$, and so we consider the congruence above modulo 8. For any odd prime $p$, we have from $p^2 \equiv 1 \pmod{8}$ that $\sigma_{2j+1}\lp p^m \rp \equiv \sigma_1\lp p^m \rp \pmod{8}$ for any integer $j \geq 0$. Now, writing $n = 2^a m$ for $m$ odd, it follows from the multiplicativity of $\sigma_j$ that 
		\begin{align} \label{eq1}
		- &n^2 \sigma_{k-7}(n) + \lp n^2 + 1 \rp \sigma_{k-5}(n) - \sigma_{k-3}(n) \\ &\ \ \equiv \lp - 4^a m^2 \sigma_{k-7}\lp 2^a \rp + \lp 4^a m^2 + 1 \rp \sigma_{k-5}\lp 2^a \rp - \sigma_{k-3}\lp 2^a \rp \rp \sigma_1(m) \pmod{8} \notag \\ &\ \ \equiv 4^a m^2 \lp \sigma_{k-5}\lp 2^a \rp - \sigma_{k-7}\lp 2^a \rp \rp \sigma_1(m) - \lp \sigma_{k-3}\lp 2^a \rp - \sigma_{k-5}\lp 2^a \rp \rp \sigma_1(m) \pmod{8}. \notag
		\end{align}
		Now, for $j \geq 1$ we observe that
		\begin{align*}
		\sigma_{2j+1}\lp 2^a \rp - \sigma_{2j-1}\lp 2^a \rp = \sum_{b=0}^a \lp 2^{(2j+1)b} - 2^{(2j-1)b} \rp = \sum_{b=0}^a 2^{(2j-1)b}\lp 4^b - 1 \rp.
		\end{align*}
		It is therefore clear that if $j \geq 2$, $\sigma_{2j+1}\lp 2^a \rp - \sigma_{2j-1}\lp 2^a \rp \equiv 0 \pmod{8}$. If then $j=1$, from the fact that $\sigma_3\lp 2^a \rp - \sigma_1\lp 2^a \rp$ is even, we obtain from \eqref{eq1} that
		\begin{align*}
		- n^2 \sigma_{k-7}(n) &+ \lp n^2 + 1 \rp \sigma_{k-5}(n) - \sigma_{k-3}(n) \equiv - \lp \sigma_{k-3}\lp 2^a \rp - \sigma_{k-5}\lp 2^a \rp \rp \sigma_1(m) \pmod{8}.
		\end{align*}
		But then for $2j+1 = k-3$, the condition $k \geq 8$ corresponds to the condition $j \geq 2$, and so we obtain (2) for all $H_k$.
		
		We now prove (3). Let $g$ be the largest integer dividing $$24b_n(H_k) = -n^2 \sigma_{k-7}(n) + \lp n^2 + 1 \rp \sigma_{k-5}(n) - \sigma_{k-3}(n)$$ for all $n \geq 2$. We know $24 \mid g$, and therefore it will be enough to show that each of $16$, $9$, and $p$ for primes $p \geq 5$ do not divide $g$. Beginning with any $p \geq 3$ prime and $m \geq 1$ coprime to $p$, it is easy to see that
		\begin{align*}
		24 b_{pm}(H_k) \equiv \sigma_{k-5}(pm) - \sigma_{k-3}(pm) \equiv \sigma_{k-5}(m) - \sigma_{k-3}(m) \pmod{p}.
		\end{align*}
		We then observe that $24 b_{2p}(H_k) \not \equiv 0 \pmod{p}$ for $p \geq 5$ (which then entails $p$ does not divide $g$) and $24b_{5p}(H_k) \not \equiv 0 \pmod{9}$ (which then entails that 3 exactly divides $g$). Finally, for $m$ odd we see that
		\begin{align*}
		24 b_{4m}(H_k) \equiv \sigma_{k-5}(4) \sigma_{k-5}(m) - \sigma_{k-3}(4) \sigma_{k-3}(m) \pmod{16}.
		\end{align*}
		It is then easy to see that this expression is not a multiple of 16 for $m = 1$ in the case $k = 8$ and for $m = 3$ if $k \geq 10$. Therefore, we have shown that $g \mid 24$, so that $g = 24$ as required.
	\end{proof}
	
	To prove Theorem~\ref{criterion2}, we also require the following about the linear independence of the forms $D^n H_{\ell}$ and the spaces they generate.
	
	\begin{lemma}\label{lem:lindepH}\mbox{}The following are true.\\
		\noindent (1) The quasimodular forms $D^n H_\ell$ (with $n\geq 0$ and $\ell\geq 6$) are linearly independent. \\
		\noindent (2) The subspace of elements of the $D^n H_\ell$ of weight $\leq k$ is of dimension $\frac{1}{8}(k-2)(k-4)$.
	\end{lemma}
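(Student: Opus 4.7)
The plan is to exploit the weight filtration $\widetilde{M} = \bigoplus_k \widetilde{M}_k$ together with a basis theorem for the quasimodular Eisenstein space $\mathcal{E}_k := \mathcal{E} \cap \widetilde{M}_k$. First I would read off the weight decomposition of each $H_\ell$ directly from~\eqref{H definition}. For $\ell \ge 8$, since $D^2 G_{\ell-6}$ has weight $\ell-2$, $D^2 G_{\ell-4}$ has weight $\ell$, $G_{\ell-4}$ has weight $\ell-4$, and $G_{\ell-2}$ has weight $\ell-2$, the top weight-$\ell$ piece of $H_\ell$ is $\frac{1}{24} D^2 G_{\ell-4}$; and for $\ell = 6$ the top weight-$6$ piece of $H_6$ is $\frac{1}{6} D^2 G_2$. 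Applying $D^n$ (which raises weight by $2$), the top weight-$(\ell+2n)$ piece of $D^n H_\ell$ is thus $\frac{1}{24} D^{n+2} G_{\ell-4}$ when $\ell \ge 8$, and $\frac{1}{6} D^{n+2} G_2$ when $\ell = 6$.

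Next I would establish that $\mathcal{E}_K$ has basis $\{D^m G_{K-2m} : 0 \le m \le (K-2)/2\}$ for every even $K \ge 2$. Linear independence follows by comparing $q^p$-coefficients at primes $p$: the coefficient of $D^m G_{K-2m}$ equals $p^m(1 + p^{K-2m-1}) = p^m + p^{K-m-1}$, and as $m$ varies over $\{0, 1, \ldots, (K-2)/2\}$ the multiset of exponents $\{m, K-m-1\}$ is exactly $\{0, 1, \ldots, K-1\}$ with each exponent appearing once, so any vanishing linear combination, viewed as a polynomial in~$p$, forces all coefficients to be zero.

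For part~(1), I would argue by contradiction: given a finite relation $\sum c_{n,\ell} D^n H_\ell = 0$, set $K = \max\{\ell + 2n : c_{n,\ell} \ne 0\}$ and project onto $\widetilde{M}_K$. By maximality, only the $(n,\ell)$ with $\ell + 2n = K$ contribute, each via its top weight part from the first step. With the substitution $m = n + 2$, these contributions are nonzero scalar multiples of the basis elements $D^m G_{K-2m}$ for $m \in \{2, 3, \ldots, (K-4)/2\}$ (coming from $\ell = 8, 10, \ldots, K$), together with one extra basis element $D^{K/2 - 1} G_2$ (coming from $\ell = 6$). Since these $m$-values are pairwise distinct, the basis property established above forces every such $c_{n,\ell}$ to vanish, contradicting the choice of $K$.

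For part~(2), the same top weight extraction shows that any element of weight $\le k$ in the span of the $D^n H_\ell$ must already be supported on those with $\ell + 2n \le k$; combined with part~(1), the dimension equals the number of pairs $(n,\ell)$ with $\ell \ge 6$ even, $n \ge 0$, and $\ell + 2n \le k$, which telescopes to
\[
\sum_{j=0}^{(k-6)/2}\left(\tfrac{k-4}{2} - j\right) = \tfrac{(k-2)(k-4)}{8}.
\]
The main subtlety lies in the bookkeeping of the first step, specifically confirming that the extra value $m = K/2 - 1$ produced by the $\ell = 6$ case never collides with the range $\{2, \ldots, (K-4)/2\}$ produced by $\ell \ge 8$ (here one checks $K/2 - 1 > (K-4)/2$); once these verifications are in place the weight filtration argument proceeds routinely.
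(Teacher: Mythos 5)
Your proposal is correct, and it reaches the conclusion by a somewhat different route than the paper. The paper proves the independence of the derivatives $D^rG_{k-2r}$ of the Eisenstein series by citing Zagier's structure theorem $\widetilde{M}_k = \bigoplus_{r=0}^{k/2-1} D^r M_{k-2r} \oplus \mathbb{C}\, D^{k/2-1}E_2$ (after reducing to homogeneous weight via the grading of $\widetilde{M}$), and then simply asserts that a non-trivial relation among the $D^nH_\ell$ would induce one among these Eisenstein derivatives; you instead prove the independence of $\{D^mG_{K-2m}\}$ from scratch by evaluating $q^p$-coefficients at primes and observing that the exponents $m$ and $K-m-1$ sweep out $\{0,\dots,K-1\}$ without repetition, and you make explicit the top-weight (leading-term) bookkeeping showing that the projections of the $D^nH_\ell$ with $\ell+2n=K$ onto $\widetilde{M}_K$ are nonzero multiples of pairwise distinct basis elements $D^mG_{K-2m}$ (with $m=n+2$, plus the separate value $m=K/2-1$ from $\ell=6$). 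This last step is exactly what guarantees that the induced relation among Eisenstein derivatives is non-trivial, a point the paper passes over quickly, so your argument is both self-contained (no appeal to Zagier's decomposition) and slightly more careful at the transfer step; the paper's version is shorter because it leans on the known structure theorem. Your dimension count in part~(2) agrees with the paper's.
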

	\begin{proof}
	We first prove (1). We show the stronger statement that the quasimodular forms $D^n E_\ell$, generating $\mathcal{E}$, are linearly independent. Observe that any non-trivial relations between the quasimodular forms $D^n H_\ell$ would imply a relation between these derivatives of Eisenstein series. 
		
		To prove the claim, first of all, observe that one can restrict to quasimodular forms of homogeneous weight. Namely, by the quasimodular transformation behaviour one recovers the weight. For quasimodular forms of homogeneous weight, the statement follows from \cite[Proposition~20~(iii)]{123}
		\[
		\widetilde{M}_k = \bigoplus_{r=0}^{k/2-1} D^r M_{k-2r} \oplus \mathbb{C} D^{k/2-1} E_2.
		\]
		
		For (2), recall that the operator $D$ increases the weight by $2$ and $H_\ell$ is of weight $\ell$. We count the number of linear independent generators of weight $\leq k$, as follows:
			\begin{align*}
		\sum_{\ell \geq 3}\,\sum_{2\ell+2n\leq k} \! 1 \,=\, \frac{1}{8}(k-2)(k-4). & \qedhere
		\end{align*}
		\end{proof}
	
	\begin{proof}[Proof of Theorem~\ref{criterion2}]
		If $f$ is a quasimodular form, then $f \in \Omega$ if and only if $Df \in \Omega.$ Therefore, it suffices to check that $H_k \in \Omega$ for $k \geq 6$. We first observe that for $n \geq 2$, we have
		\begin{align*}
		(n+1) b_n(H_6) = \dfrac{n+1}{6} \left[ \lp n^2 - n + 1 \rp \sigma_1(n) - \sigma_3(n) \right] = \dfrac{1}{6} \left[ \lp n^3 + 1 \rp \sigma_1(n) - \lp n+1 \rp \sigma_3(n) \right].
		\end{align*}
		This gives the identity $(D+1) H_6=\frac{1}{6}f_{1,3}$, and so
		Lemma~\ref{criterion1} implies that $H_6\in \Omega$. 
		
		For $k \geq 8$, we calculate for $n \geq 2$ that
		\begin{align*}
		24b_n(H_k) 
		&= \sum_{d|n} \left[ -n^2 d^{k-7} + \lp n^2 + 1 \rp d^{k-5} - d^{k-3} \right] = \sum_{\substack{1 < d < n \\ d|n}} d^{k-7} \lp d^2-1 \rp \lp n^2 - d^2 \rp.
		\end{align*}
		By inspection, we see that $H_k \in \Omega$.
		
		For the converse, let $f\in \mathcal{E}\cap \Omega$ be of weight $\leq k$. As $D$ increases the weight by $2$, we can write 
		\[
		f\,=\, \sum_{\ell= 1}^{k/2} p_{2\ell}(D)\, G_{2\ell} \]
		with $p_{2\ell}$ a polynomial of degree at most $\frac{1}{2}k-\ell$. The vanishing of the Fourier coefficients at all primes is equivalent to
		\begin{equation}\label{eq:constr}
		\sum_{\ell= 1}^{k/2} p_{2\ell}(x) (x^{2\ell-1}+1) = 0.
		\end{equation}
		(Initially, this equality only holds for primes~$x$, and then, since a polynomial of bounded degree is uniquely determined by its values at primes, this is an equality of polynomials in the variable~$x$.) Observe the dimension of the space of sequences of polynomials $p_2, p_4,\ldots, p_{k}$ with the given degree bound equals
		\[
		\sum_{\ell=1}^{k/2} \left(\frac{1}{2}k-\ell+1 \right) \,=\, \frac{1}{8}k(k+2).
		\]
		Note that the left-hand side of \eqref{eq:constr} is of degree $\leq k-1$. Hence, condition~\eqref{eq:constr} gives $\leq k$ linearly independent constraints between the coefficients of the polynomials. For notational convenience, we let
		\[p_{2\ell}(x) = \sum_{j=0}^{k/2-\ell} a_{j,2\ell}\, x^j\]
		for real coefficients $a_{j,2\ell}$, and by $M$ denote the matrix of linear relations of the $a_{j,2\ell}$ given by~\eqref{eq:constr}, where the rows are parameterized by the degrees $0,\ldots,k-1$ and the columns by pairs~$(j,2\ell)$, ordered lexicographically.
		
		We show that, after removing the $0$th row of $M$, the matrix is (up to permuting the columns) in row echelon form. Namely, observe that in the $i$th row the smallest pair $(j,2\ell)$ for which the corresponding entry in $M$ is non-zero is given by
		\[
		\begin{cases}
		(0,2) & i=0, \\
		(i+1,0) & i>0 \text{ odd},\\
		(i,1) & i>0 \text{ even}.
		\end{cases}
		\]
		Hence, the matrix obtained from $M$ by removing the $0$th row is of full rank. We conclude the rank of~$M$ is at least $k-1$. As \eqref{eq:constr} is divisible by $x+1$ one can easily infer that the rank is exactly~${k-1}$. Therefore, the dimension of the space of polynomials satisfying \eqref{eq:constr} is given by
		\[
		\sum_{\ell=1}^{k/2} \left(\frac{1}{2}k-\ell+1 \right) - (k-1) = \frac{1}{8}(k-2)(k-4).
		\]
		Accordingly, this is also the dimension of the vector space of elements $D^n H_{2\ell}$ of weight $\leq k$ by \cref{lem:lindepH}(2).
		\end{proof}
			
	\section{Quasimodularity of $\mathcal{U}_a(q)$ and the proofs
		Theorems~\ref{FirstExamples} and \ref{General_Ua}}\label{UaSection}
	Here we use \cref{criterion2} to
	prove Theorems~\ref{FirstExamples} and~\ref{General_Ua}.
	
	\subsection{Proof of \cref{FirstExamples}}
	
	MacMahon's series $\mathcal{U}_a(q)$ have been the focus of much  study (for example, see \cite{rose,AOS,AAT, Bachmann, OnoSingh}). The first glimpses of their beautiful properties are captured by  the identities (for example, see~\cite{MacMahon, AOS})
	$$
	\mathcal{U}_1(q)=\sum_{n\geq1}\sigma_1(n)q^n \ \ \ \ {\text {\rm and}}\ \ \ \ 
	\mathcal{U}_2(q)=\frac{1}{8}\sum_{n\geq1} \left((-2n+1)\sigma_1(n)+\sigma_3(n)\right)q^n.
	$$
	To prove~(1), we note that the sequence we wish to study is the coefficients of the series~$F$ given by
	\begin{align*}
	F := \lp D^2 - 3D + 2 \rp \mathcal{U}_1(q) - 8 \ \mathcal{U}_2(q) = \sum_{n \geq 1} \left[ \lp n^2 - n + 1 \rp \sigma_1(n) - \sigma_3(n) \right] q^n.
	\end{align*}
	Noting the identities $F = 6 H_6$ and $\lp D+1 \rp F = f_{1,3}$, part~(1) of \cref{FirstExamples} then follows by either \cref{criterion2} or \cref{criterion1}, respectively.
	
	To prove (2), we begin by noting that \cite[p.~102]{MacMahon}
	\begin{align*}
	\mathcal{U}_3(q) = \dfrac{1}{1920}\sum_{n \geq 1} \left[ \lp 40n^2 - 100n + 37 \rp \sigma_1(n)- 10(3n-5)\sigma_3(n)+3 \sigma_5(n)  \right] q^n.
	\end{align*}
	The sequence we wish to study form the coefficients of the series $G$ given by
	\begin{align*}
	G &:= \lp 3D^3 - 13D^2 + 18D - 8 \rp \mathcal{U}_1(q) + \lp 12D^2 - 120D + 212 \rp \mathcal{U}_2(q) - 960 \ \mathcal{U}_3(q).
	\end{align*}
	It is easy to see using previous formulas for $\mathcal{U}_j(q)$ that $G = 36 H_8$, and so~(2) follows by \cref{criterion2}.

	\subsection{Proof of Theorem~\ref{General_Ua}}
	
	We recall the result of Andrews and Rose \cite[Corollary 4]{Andrews-Rose} that $\mathcal{U}_a(q)\in \widetilde{M}_{\leq 2a}$.  By \cref{criterion2}, any linear combination of derivatives of various $\mathcal U_a(q)$ detects primes if it can be represented by linear combinations of derivatives of the quasimodular forms $H_k$. Since the spaces of quasimodular forms of (mixed) weight $\leq 2a$ is finite-dimensional, the examples given in the Appendix can be verified by a straightforward linear algebra calculation\footnote{In particular, this can be done using a row-echelon basis for the spaces $\widetilde{M}_{\leq a}$ with rational coefficients.}. 
	In particular, the examples in \cref{tab:1} are exactly $6H_6\mspace{1mu}, 36 H_8\mspace{1mu}, 90 H_{10}\mspace{1mu}, 90 H_{12}\mspace{1mu}$, and $30\lp D^2 + 1 \rp H_{14} + 30 H_{16}$ respectively.

	\section{MacMahonesque series and the proof of \cref{GeneralDetection} and \cref{Convolution}}\label{GeneralizedUaSection}
	\subsection{Certain $q$-analogues of multiple zeta values}
	The proofs in the previous section rely on the fact that the $\mathcal U_a$ are quasimodular forms. Similarly, for odd $a$, we have that $\mathcal{U}_{(a)}(q)$ is quasimodular, as we have
	\begin{equation}\label{eq:Ua} \mathcal U_{(a)}(q) = \sum_{n\geq 1} M_{(a)}(n) q^n =  \frac{B_{a+1}}{2(a+1)} + G_{a+1}(\tau).
	\end{equation}
	The proof of \cref{GeneralDetection} relies on a refinement of this phenomenon. Although the generating functions
	\begin{equation}
	\mathcal{U}_{\vec{a}}(q):=\sum_{n=1}^{\infty} M_{\vec{a}}(n)q^n
	\end{equation}
	are not always a quasimodular forms, it turns out that they arise naturally in the study of $q$-analogues of multiple zeta values (see \cite[Section~6]{BachmannKuhn}), which often turn out to be related to quasimodular forms.
	
	For completeness, before we proceed with the tools for the proofs of Theorem~\ref{GeneralDetection} and Theorem~\ref{Convolution}, we consider the $q$-series generating functions for the MacMahonesque $M_{\vec{a}}(n)$ that generalize (\ref{Ua}).
	
	\begin{lemma}[{\cite[Lemma~2.5]{BachmannKuhn}}]\label{GeneratingFunctions}
		If $\vec{a}\in \N^a,$ then we have that
		$$
		\mathcal{U}_{\vec{a}}(q)=\sum_{n=1}^{\infty} M_{\vec{a}}(n)q^n = \sum_{0<s_1<s_2<\cdots<s_a} \frac{q^{s_1 + s_2 + \dots + s_a} P_{v_1}(q^{s_1}) P_{v_2}(q^{s_2}) \cdots P_{v_a}(q^{s_a})}{(1-q^{s_1})^{v_1+1} (1-q^{s_2})^{v_2+1} \cdots (1-q^{s_a})^{v_a+1}},
		$$
		where $P_k(x)$ is the $k$th Eulerian polynomial,  which are defined by
			\begin{align*}
		\dfrac{x P_n(x)}{\lp 1 - x \rp^{n+1}} := \sum_{k \geq 1} k^n x^k.		\end{align*}
	\end{lemma}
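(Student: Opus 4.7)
\medskip

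\noindent\textbf{Proof proposal.} The plan is to start from the definition of $M_{\vec a}(n)$ in~\eqref{M_general_def} and rewrite $\mathcal{U}_{\vec a}(q)$ as a sum over the indexing data $(s_1,\ldots,s_a;m_1,\ldots,m_a)$ directly, suppressing the auxiliary variable~$n$. Concretely, after substituting~\eqref{M_general_def} into the definition of $\mathcal{U}_{\vec a}(q)$ and grouping the coefficient of $q^{m_1s_1+\cdots+m_as_a}$ across all $n\geq 1$, I would obtain
\[
\mathcal{U}_{\vec a}(q) \;=\; \sum_{0<s_1<s_2<\cdots<s_a}\ \sum_{m_1,\ldots,m_a>0} m_1^{v_1}m_2^{v_2}\cdots m_a^{v_a}\, q^{m_1s_1+m_2s_2+\cdots+m_as_a}.
\]
This rearrangement is legitimate in the ring of formal power series $\mathbb{Z}[\![q]\!]$ because, for each fixed $N$, only finitely many tuples $(s_i,m_i)$ contribute to the coefficient of $q^N$ (indeed $s_1+\cdots+s_a\leq N$ and each $m_i\leq N$), so every manipulation below is a rearrangement of a finite sum at the level of each $q^N$-coefficient.

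Next, since the summand factors completely across the index $i$, I would separate the product and interchange the sum over $(m_1,\ldots,m_a)$ with the product over $i$, yielding
\[
\mathcal{U}_{\vec a}(q) \;=\; \sum_{0<s_1<\cdots<s_a}\ \prod_{i=1}^{a}\ \sum_{m_i>0} m_i^{v_i}\, q^{m_i s_i}.
\]
The inner sum over $m_i$ is then evaluated by the defining property of the Eulerian polynomials stated in the lemma: setting $x=q^{s_i}$ and $n=v_i$ in $\sum_{k\geq 1} k^n x^k = xP_n(x)/(1-x)^{n+1}$ gives
\[
\sum_{m_i>0} m_i^{v_i}\, q^{m_i s_i} \;=\; \frac{q^{s_i}\,P_{v_i}(q^{s_i})}{(1-q^{s_i})^{v_i+1}}.
\]
Multiplying these over $i=1,\ldots,a$ and pulling out the factor $q^{s_1+\cdots+s_a}$ produces exactly the claimed right-hand side.

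The argument is essentially a bookkeeping exercise, so there is no serious obstacle; the only point requiring a word of care is the interchange of infinite summations, which as noted above is justified coefficient-by-coefficient in $\mathbb{Z}[\![q]\!]$ by the finiteness of contributions to each $q^N$. Everything else reduces to the Eulerian polynomial identity, which is part of the statement.
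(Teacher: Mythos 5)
Your proof is correct and is the standard derivation: substitute the definition \eqref{M_general_def}, factor the sum over the multiplicities $m_i$, and evaluate each inner sum $\sum_{m\geq 1} m^{v}x^{m}$ via the Eulerian polynomial identity, with the interchange of summations justified coefficient-by-coefficient in $\mathbb{Z}[\![q]\!]$. The paper itself cites Bachmann--K\"uhn for this lemma rather than proving it, but the remark immediately following it sketches exactly this expansion technique (for the variant functions $N_{\vec a}$), so your argument matches the intended approach.
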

	
\begin{remark} 
In analogy with (\ref{Ua}), one might think that it makes sense to make use of the more aesthetically pleasing $q$-series
$$
\mathcal{V}_{\vec{a}}(q)=\sum_{n\geq 1} N_{\vec{a}}(n)q^n :=\sum_{0< s_1<s_2<\cdots<s_a} \frac{q^{s_1+s_2+\cdots+s_a}}{(1-q^{s_1})^{v_1+1}(1-q^{s_2})^{v_2+1}\cdots(1-q^{s_a})^{v_a+1}}.
$$
However, these $q$-series are the generating functions for the  less pleasing partition functions
$$
N_{\vec{a}}(n)=\sum_{\substack{0<s_1<s_2<\dots<s_a\\
			n=m_1 s_1+m_2s_2+\dots+m_a s_a}} \binom{m_1+v_1-1}{v_1}\binom{m_2+v_2-1}{v_2}\cdots
			\binom{m_a+v_a-1}{v_a}.
$$
To derive this expression, one notes for non-negative integers $v$ (and $|q|<1$) that
$$
\frac{q}{(1-q)^{v+1}}=\sum_{m=1}^{\infty} \binom{m+v-1}{v}q^m.  
$$
All of our results regarding $M_{\vec{a}}(n)$ could be reformulated in terms of $N_{\vec{a}}(n)$.
Finally, we note that the natural definition of $M_{\vec{a}}(n)$ in (\ref{M_general_def}) necessitates the Eulerian polynomials in Lemma~\ref{GeneratingFunctions}. For convenience, we offer the first few polynomials
$$
P_0(x)=1, \ \ P_1(x)=1,\ \ P_2(x)=x+1, \ \ P_3(x)=x^2+4x+1,\ \ P_4(x)=x^3+11x^2+11x+1,\dots.
$$
For $\vec{a}=(1,1,\dots,1),$ we recover $\mathcal{U}_{\vec{a}}(q)=\mathcal{V}_{\vec{a}}(q)$ as $P_1(x)=1.$
\end{remark}

	Write $\mathcal{Z}_q$ for the vector space generated additively by the MacMahonesque series $\mathcal{U}_{\vec{a}}$. It turns out that $\mathcal{Z}_q$ is strictly larger than the space of quasimodular forms. Furthermore, this space enjoys two important properties; namely, (i) it is an algebra, and (ii) it is closed under differentiation by $D=q\frac{d}{dq}$. Using the theory of quasi-shuffle algebras, Bachmann and Kühn proved the following theorem \cite[Theorem~1.3 and~1.7]{BachmannKuhn}. 
	\begin{theorem}\label{thm:BachmannKuhn}
		The space $\mathcal{Z}_q$ is a differential subalgebra of $\mathbb{Q}[\![q]\!]$ containing $\widetilde{M}$. 
	\end{theorem}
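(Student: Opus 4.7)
I would establish the three parts of the claim in turn: $\mathcal Z_q \supseteq \widetilde M$, $\mathcal Z_q$ is closed under multiplication, and $\mathcal Z_q$ is closed under $D$. Closure under addition is built into the definition of $\mathcal Z_q$ as an additively generated subspace, so these three suffice.

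For the containment, I observe that by (\ref{eq:Ua}) each odd-weight Eisenstein series $G_{a+1}$ (for odd $a \geq 1$) equals $\mathcal U_{(a)}(q)$ up to a rational constant. Since $\widetilde M = \mathbb Q[G_2, G_4, G_6]$, the inclusion $\widetilde M \subseteq \mathcal Z_q$ follows once the algebra property is in hand, together with the empty-vector convention $\mathcal U_{\emptyset} := 1 \in \mathcal Z_q$.

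For the algebra property, the key is an explicit \emph{quasi-shuffle product formula} for pairs of generators. Writing
\[
\mathcal U_{\vec a}(q) = \sum_{\substack{0 < s_1 < \cdots < s_a \\ m_1,\ldots,m_a \geq 1}} \prod_{i=1}^a m_i^{v_i}\, q^{m_i s_i},
\]
I would expand the product $\mathcal U_{\vec\alpha}(q)\,\mathcal U_{\vec\beta}(q)$ as a double iterated sum and partition it by the relative ordering of the two increasing $s$-tuples, allowing coincidences. Non-colliding interleavings give ordinary shuffles, each already of MacMahonesque form. At each collision $s_i = t_j = s$, the inner factor $\sum_{m,n\geq 1} m^{v_i}n^{w_j}q^{(m+n)s}$ is re-expanded using the polynomial $k \mapsto \sum_{m+n=k} m^{v_i}n^{w_j}$ of degree $v_i+w_j+1$, yielding a $\mathbb Q$-linear combination of single-index building blocks $\sum_k k^r q^{ks}$ for $r \leq v_i+w_j+1$. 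Each block becomes a single new entry of a MacMahonesque index vector, so $\mathcal U_{\vec\alpha}\,\mathcal U_{\vec\beta} \in \mathcal Z_q$. This is essentially the stuffle product on multiple $q$-zeta values translated into partition-theoretic language.

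For closure under $D$, direct computation using $n = \sum_j m_j s_j$ gives
\[
D\mathcal U_{\vec a}(q) = \sum_{i=1}^a \sum_{\vec s,\vec m}\, s_i\, m_i^{v_i+1} \prod_{j\neq i} m_j^{v_j}\, q^{\sum_j m_j s_j},
\]
and the task reduces to absorbing the extra $s_i$-factor into a linear combination of MacMahonesque generators. I would follow Bachmann--K\"uhn's derivation identity: write $s_i = \#\{r \in \mathbb Z_{\geq 1} : r \leq s_i\}$, introduce $r$ as a new ordered variable, and split according to whether $r$ lies strictly between two consecutive $s_j$'s or coincides with one of them. The "between" configurations reassemble, after a partial-fraction rewriting, into length-$(a+1)$ MacMahonesque series of weight $|\vec a|+2$; the "coincidence" configurations are handled by the quasi-shuffle mechanism of the preceding step.

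\textbf{Main obstacle.} The hardest step is the third: the absorption of the $s_i$-factor requires genuinely new algebraic input beyond the quasi-shuffle product of step two, and tracking the explicit rational coefficients and index vectors produced by the derivation identity is delicate. For this reason it may be cleaner to invoke the precise form of the derivation formula from Theorem~1.7 of \cite{BachmannKuhn} rather than rederive it from scratch. Steps one and two, by contrast, amount to combinatorial bookkeeping once the key Eulerian-polynomial identity is established.
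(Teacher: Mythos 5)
This theorem is not proved in the paper: it is quoted from Bachmann and K\"uhn \cite[Theorems~1.3 and~1.7]{BachmannKuhn}, and the only internal machinery the paper recalls is the quasi-shuffle product of Proposition~\ref{ProductProperty}. Measured against that, your first two steps are sound and match the intended argument: the containment $\widetilde{M}\subseteq\mathcal{Z}_q$ does follow from \eqref{eq:Ua} together with the algebra property (and the convention $\mathcal{U}_{\emptyset}=1$), and your collision analysis of $\mathcal{U}_{\vec\alpha}\,\mathcal{U}_{\vec\beta}$ --- re-expanding $\sum_{m+n=k}m^{v_i}n^{w_j}$ as a polynomial in $k$ of degree $v_i+w_j+1$ with no constant term --- is exactly the $\diamond$-product $z_i\diamond z_j=\sum_m c_{i,j,m}z_{m+1}$ written out in Section~\ref{GeneralizedUaSection}; note that the leading coefficient $c_{i,j,i+j}=\tfrac{i!\,j!}{(i+j+1)!}$ confirms your degree count.

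The gap is in step three. Writing $s_i=\#\{r\geq 1: r\leq s_i\}$ and inserting $r$ as a new ordered variable does not produce MacMahonesque series in the ``between'' configurations: by Lemma~\ref{GeneratingFunctions}, every slot of an index vector must carry a factor $\sum_{m\geq1}m^{w}q^{ms}=\tfrac{q^{s}P_w(q^s)}{(1-q^s)^{w+1}}$, which equals $\tfrac{q^s}{1-q^s}$ even in the minimal case $w=0$, whereas your bare variable $r$ contributes only the constant $1$ and no power of $q$. No partial-fraction manipulation converts a sum over $s_{j-1}<r<s_j$ with nothing attached to $r$ into such a factor; summing $r$ out just returns $s_j-s_{j-1}-1$ and goes in a circle. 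The actual proof of closure under $D$ in \cite{BachmannKuhn} is genuinely harder and proceeds through the second (``conjugate'') representation of the brackets and the defect between the two resulting quasi-shuffle products. Your fallback of invoking \cite[Theorem~1.7]{BachmannKuhn} is legitimate --- it is precisely what the paper itself does --- but then step three is a citation rather than a proof, and the from-scratch sketch you give for it would not go through as written.
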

	Observe that this theorem directly implies that any product of (in fact, any polynomial in) the MacMahon series is a linear combination of MacMahonesque series $\mathcal U_{\vec{a}}$. Similar, any derivative of a MacMahon series is a linear combination of the MacMahonesque series $\mathcal U_{\vec{a}}$. Expressing these observations in terms of the MacMahon functions, we obtain \Cref{Convolution}.
	
	The proof of this theorem allows one to compute the product of two MacMahoneque series recursively, as we explain now. Let $A = \{z_1, z_2,\ldots \}$ be the set of \emph{letters} $z_j$, one for each positive integer.  Furthermore, we let $\Q A$ be the $\Q$-vector space generated by these letters. A \emph{word} is a non-commutative product of letters, and we write $\Q\langle A\rangle$ for the
	(non-commutative) polynomial algebra over $\Q$ generated by words with letters in $A$. On $\Q A$, we define a commutative and associative product $\diamond$ by\footnote{
		Since our $U_{\vec{a}}$ correspond to $\left(\prod_{i} v_i!\right)[\nu_1+1,\ldots,\nu_a+1]$ in~\cite{BachmannKuhn}, we chose a different normalization here.}
	\begin{equation}
	z_i \diamond z_j := \sum_{m=0}^{i+j} c_{i,j,m}\, z_{m+1}\,, 
	\end{equation}
	where we let
	\[
	c_{i,j,{i+j}} := 
	\frac{i!j!}{(i+j+1)!},\]
	and for $m<i+j$ we let
	\[
	c_{i,j,m}:=\left[(-1)^{i} \binom{i}{m+1} +(-1)^{j} \binom{j}{m+1}\right]\frac{B_{i+j-m}}{i+j-m}.
	\]
	Then, for $x, y \in A$ and $w, v \in \Q\langle A\rangle$, we recursively define the \emph{quasi-shuffle product} by $1 * w = w * 1 = w$ and
	\[xw * yv := x(w * yv) + y(xw * v) + (x \diamond y)(w * v).\]
	Identify $\Q\langle A\rangle$ with the vector space generated by finite sequences of positive integers, i.e. $z_{v_1}\cdots z_{v_a}$ corresponds to $\vec{a}=(v_1,\ldots,v_a)$. Under this identification, the quasi-shuffle product yields, e.g.,
	\[
	(1) * (1,1) = 3(1,1,1) + \frac{1}{6}(3,1) + \frac{1}{6}(1,3) -\frac{1}{3} (1,1).
	\]
	We stress that this is a formal sum of vectors, which cannot be simplified by coordinate-wise addition.
	Moreover, we extend the generalized MacMahon series linearly, i.e., for a formal finite linear combinations of finite sequences $\sum_{\vec{a}} c_{\vec{a}}\, \vec{a}$ 
	we set 
	\begin{equation}
	\mathcal{U}_{\sum_{\vec{a}} c_{\vec{a}}\, \vec{a}} := \sum\nolimits_{\vec{a}} c_{\vec{a}} \, \mathcal{U}_{\vec{a}} .
	\end{equation}
	Then by~\cite[Proposition~2.10]{BachmannKuhn}, the power series multiplication of the generalized MacMahon series can be computed recursively using the quasi-shuffle product defined above. 
	\begin{proposition}\label{ProductProperty} For all $\vec{a}\in \N^a$ and $\vec{b} \in \N^b$, one has
		\[
		\mathcal{U}_{\vec{a}}(q)\, \mathcal{U}_{\vec{b}}(q) = \mathcal{U}_{{\vec{a}}*{\vec{b}}}(q).
		\]
		Moreover, assigning weight $|\vec{a}|+\ell(\vec{a})$ to $\vec{a}$, the algebra $\mathcal{Z}_q$ becomes a filtered algebra.
	\end{proposition}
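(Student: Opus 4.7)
My plan is to expand the product $\mathcal{U}_{\vec{a}}(q)\,\mathcal{U}_{\vec{b}}(q)$ as a double sum over strictly increasing tuples, stratify it by the interleaving pattern of the two tuples, and reduce the ``collision'' contributions via a pointwise stuffle identity. Writing $f_v(x) := \sum_{k\geq 1} k^v x^k = \frac{x P_v(x)}{(1-x)^{v+1}}$, \cref{GeneratingFunctions} gives $\mathcal{U}_{\vec{a}}(q) = \sum_{0<s_1<\cdots<s_a}\prod_i f_{v_i}(q^{s_i})$, so
\[
\mathcal{U}_{\vec{a}}(q)\,\mathcal{U}_{\vec{b}}(q) \,=\, \sum_{\substack{0<s_1<\cdots<s_a\\ 0<t_1<\cdots<t_b}}\prod_{i} f_{v_i}(q^{s_i})\prod_{j} f_{w_j}(q^{t_j}).
\]
Every such pair of tuples is uniquely encoded by the sorted merge of $\{s_i\}\cup\{t_j\}$ together with a labeling at each position as ``from $\vec{a}$ only,'' ``from $\vec{b}$ only,'' or a ``collision'' $s_i=t_j$. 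Non-collision positions contribute a single $f$-factor; collisions contribute a pointwise product $f_{v_i}(q^s)\,f_{w_j}(q^s)$ that must be rewritten.

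The crux is the single-variable stuffle identity
\[
f_i(x)\, f_j(x) \,=\, \sum_{m=0}^{i+j} c_{i,j,m}\, f_{m+1}(x),
\]
with the very coefficients $c_{i,j,m}$ displayed in the excerpt. To prove it, I would write the left side as $\sum_{n\geq 2}\bigl(\sum_{k=1}^{n-1} k^i (n-k)^j\bigr) x^n$, expand $(n-k)^j$ by the binomial theorem, and evaluate each resulting power sum $\sum_{k=1}^{n-1} k^{i+r}$ via Faulhaber's formula. Matching powers of $n$ on both sides (using $[x^n] f_{m+1}(x) = n^{m+1}$) pins down the claimed coefficients: the top-degree term $\frac{i!\,j!}{(i+j+1)!}$ arises from the leading Beta-integral-like asymptotic, and the subleading symmetric combination $\bigl[(-1)^i\binom{i}{m+1}+(-1)^j\binom{j}{m+1}\bigr]\frac{B_{i+j-m}}{i+j-m}$ assembles after combining the Faulhaber contributions from the $k^{i+r}$ and $(n-k)^{j+r}$ expansions.

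With the stuffle identity in hand, the product formula follows by induction on $a+b$, precisely paralleling the recursive clause $xw*yv = x(w*yv) + y(xw*v) + (x\diamond y)(w*v)$. Examining the smallest index of the merged sequence, it is either (i) the smallest $s$-index alone, yielding a factor $f_{v_1}(q^{s_1})$ times a product of two shorter MacMahonesque series and matching $x(w*yv)$; (ii) the smallest $t$-index alone, symmetrically matching $y(xw*v)$; or (iii) a collision $s_1 = t_1$, in which case the stuffle identity splits the colliding factor into a sum of single $f_{m+1}(q^{s_1})$'s followed by a product of the two tails, exactly matching $(x\diamond y)(w*v)$. In each case the inductive hypothesis closes the calculation.

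For the filtration, assigning weight $v+1$ to each letter $z_v$ makes the weight of $\vec{a}=(v_1,\ldots,v_a)$ equal to $|\vec{a}|+\ell(\vec{a})$. Concatenation of letters is strictly additive in weight, while the stuffle $z_i \diamond z_j$ produces letters $z_{m+1}$ with $m\leq i+j$, each of weight $m+2 \leq (i+1)+(j+1)$. A routine induction on the quasi-shuffle recursion then shows every term of $\vec{a}*\vec{b}$ has weight at most $\mathrm{wt}(\vec{a}) + \mathrm{wt}(\vec{b})$, establishing that $\mathcal{Z}_q$ is filtered. The main technical obstacle is the subleading Bernoulli computation in the stuffle identity: the top-degree term is transparent, but extracting the displayed symmetric closed form for the $c_{i,j,m}$ requires a careful rearrangement of Faulhaber's polynomial, and this is exactly where the $B_{i+j-m}$'s enter.
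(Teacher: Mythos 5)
The paper itself does not prove \cref{ProductProperty}; it is imported from Bachmann--K\"uhn \cite[Proposition~2.10]{BachmannKuhn} (after the renormalization recorded in the footnote), so your sketch supplies an argument the paper outsources. Your architecture is the standard quasi-shuffle proof and is sound: writing $\mathcal{U}_{\vec{a}}=\sum_{0<s_1<\cdots<s_a}\prod_i f_{v_i}(q^{s_i})$ with $f_v(x)=\sum_{k\geq1}k^vx^k$ via \cref{GeneratingFunctions}, stratifying the product by the merge pattern of the two index sets, and inducting on $a+b$ against the recursion $xw*yv=x(w*yv)+y(xw*v)+(x\diamond y)(w*v)$ reduces everything to the one-variable identity $f_i(x)f_j(x)=\sum_{m=0}^{i+j}c_{i,j,m}f_{m+1}(x)$, and the filtration claim follows from $m\leq i+j$ exactly as you say. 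It is worth making explicit that this one-variable identity is equivalent to the polynomial identity $\sum_{k=1}^{n-1}k^i(n-k)^j=\sum_{m}c_{i,j,m}\,n^{m+1}$; since the left side agrees for all $n\geq 1$ with a polynomial of degree $i+j+1$ and zero constant term, coefficients $c_{i,j,m}$ making this true exist and are unique, so the existence of \emph{some} quasi-shuffle structure --- and hence the proposition and the filtration bound --- is already secured by your argument without any closed form.

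The gap is that you assert, without computing, that Faulhaber's formula reproduces the particular closed form for $c_{i,j,m}$ displayed in the paper, and that is precisely the delicate part. A spot check: for $(i,j)=(1,2)$ one has
\[
\sum_{k=1}^{n-1}k(n-k)^2=\tfrac{1}{12}\lp n^4-n^2\rp,
\]
so the true value of $c_{1,2,1}$ is $-\tfrac{1}{12}$, whereas the displayed formula gives $\bigl[(-1)^1\binom{1}{2}+(-1)^2\binom{2}{2}\bigr]\tfrac{B_2}{2}=+\tfrac{1}{12}$. (The displayed formula does check out for $(i,j)=(1,1)$ and $(1,3)$, i.e.\ for odd entries, which is all the paper uses downstream via the symmetrization in Theorem~\ref{lem:symsum}.) So either the printed $c_{i,j,m}$ carries a sign slip in mixed parity, or your Faulhaber computation will not ``pin down the claimed coefficients'' as stated; in either case this step cannot be left as an assertion. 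You should actually carry out the computation --- expand $(n-k)^j$ binomially, apply $\sum_{k=1}^{n-1}k^p=\frac{1}{p+1}\sum_{r}\binom{p+1}{r}B_r\,n^{p+1-r}$ with $B_1=-\tfrac12$, and record the coefficients you obtain --- and then reconcile them with the displayed formula or correct it.
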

	\begin{example} As an example of Proposition~\ref{ProductProperty}, we have
		\[
		\mathcal{U}_{(1)}(q)\, \mathcal{U}_{(1,1)}(q) = 3 \mathcal{U}_{(1,1,1)} (q)+ \frac{1}{6}\mathcal{U}_{(3,1)}(q) +\frac{1}{6} \mathcal{U}_{(1,3)}(q) - \frac{1}{3} \mathcal{U}_{(1,1)}(q) =\mathcal{U}_{(1) * (1,1)}(q),
		\]
		which is equivalent to
		\[
		\mathcal{U}_1(q)\, \mathcal{U}_2(q) = 3 \mathcal{U}_3(q) + \frac{1}{6}\mathcal{U}_{(3,1)}(q) + \frac{1}{6}\mathcal{U}_{(1,3)}(q) -\frac{1}{3} \mathcal{U}_2(q).
		\]
		This example illustrates the fact that the vector space generated by the MacMahon series is not closed under multiplication, but $\mathcal{Z}_q$ is. 
	\end{example}
	
	\subsection{Symmetric group action} As we have emphasized above, a generic MacMahonesque $q$-series $\mathcal{U}_{\vec{a}}(q)$ is not quasimodular. However, their ``symmetrized sums'' are quasimodular, and provide canonical $q$-series that additively generate the ring of quasimodular forms. 
	
	To make this precise, we recall that
	 the generalized MacMahon series $\mathcal U_{\vec a}(q)$ admit the natural action of the $a$-th symmetric group $S_a$ by
	\begin{align}
	\sigma \mathcal U_{\vec a}(q) := \mathcal U_{\sigma \vec a}(q), 
	\end{align}
	where for $\sigma\in S_a$ we let $\sigma \vec a := \lp v_{\sigma(1)}, \dots, v_{\sigma(a)} \rp$.
	For those $\vec{a}$ consisting of only odd entries, we consider the {\it symmetrized MacMahonesque} $q$-series
	 \begin{equation}
	\mathcal{U}_{\vec{a}}^{\Sym}(q):= \sum_{\sigma\in S_a} \sigma\mathcal U_{\vec a}(q).
	 \end{equation}

	\begin{theorem}\label{lem:symsum} The following are true.
	
	\noindent
	(1) For all $\vec{a}\in (2\Z_+-1)^a$, we have that $\mathcal{U}_{\vec{a}}^{\Sym}(q)\in \widetilde{M}_{\leq |\vec{a}|+a}.$
	
	\noindent
	(2) 
	The $\mathcal{U}_{\vec{a}}^{\Sym}(q)$ with $|\vec{a}|+a\leq k$ additively generate $\widetilde{M}_{\leq k}$. 
	
	\noindent
	(3) The ring of quasimodular forms is additively generated as follows
		$$
\widetilde{M}= \left
\langle  1,\hskip.02in \mathcal{U}_{(v_1)}, \hskip.02in \mathcal{U}^{\Sym}_{(v_1, v_2)}, \hskip.02in \mathcal{U}^{\Sym}_{(v_1,1,\dots,1)}, \hskip.02in \mathcal{U}^{\Sym}_{(v_1,v_2,1,\dots,1)} \ |\ v_1, v_2 \ {\text {\rm positive odds}}\right \rangle.
$$
	\end{theorem}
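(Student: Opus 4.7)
The plan is to establish (1) and (2) together by induction on depth using the quasi-shuffle identity of Proposition~\ref{ProductProperty}, and then to deduce (3) by a further structural reduction that cuts the number of ``large'' entries in a tuple. Before the induction I would record the key arithmetic fact that, for odd $i,j$, the expansion $z_i\diamond z_j=\sum_m c_{i,j,m}z_{m+1}$ is supported on odd values of $m+1$. Indeed, the prefactor $B_{i+j-m}/(i+j-m)$ vanishes whenever $i+j-m$ is odd and $\geq 3$; since $i+j$ is even, only even $m$'s survive, while the would-be exception $i+j-m=1$ is killed by $\binom{i}{m+1}=\binom{j}{m+1}=0$ (as $m+1=i+j>\max(i,j)$). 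In particular, iterating $*$ on odd singletons yields only tuples of odd entries.

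To prove (1) and (2), I would induct on depth~$a$. The base case~$a=1$ is~\eqref{eq:Ua}. For the step, $\prod_{i=1}^a\mathcal{U}_{(v_i)}(q)$ is quasimodular of weight $|\vec a|+a$ and, by Proposition~\ref{ProductProperty}, equals $\mathcal{U}_{(v_1)*\cdots*(v_a)}(q)$. Splitting the quasi-shuffle expansion by depth, the depth-$a$ piece is the pure shuffle $\sum_{\sigma\in S_a}\sigma\vec a$, contributing $\mathcal{U}^{\Sym}_{\vec a}(q)$; each depth-$b<a$ piece is indexed by a set partition of $\{1,\ldots,a\}$ into $b$ blocks together with an ordering of the blocks, and summing the $b!$ orderings for each partition assembles the contribution into a $\mathbb{Q}$-linear combination of $\mathcal{U}^{\Sym}_{\vec b}$'s with $\ell(\vec b)=b<a$ and all-odd entries (by the key lemma). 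These are quasimodular by the inductive hypothesis, so $\mathcal{U}^{\Sym}_{\vec a}\in\widetilde M_{\leq|\vec a|+a}$, yielding (1). Running the expansion in reverse proves (2): $\widetilde M$ is generated as an algebra by $G_2,G_4,G_6$, equivalently by $\mathcal{U}_{(1)},\mathcal{U}_{(3)},\mathcal{U}_{(5)}$ modulo constants, so every weight-$k$ monomial $G_{v_1+1}\cdots G_{v_a+1}$ lies in the span of $\mathcal{U}^{\Sym}_{\vec b}$'s with $|\vec b|+\ell(\vec b)\leq k$.

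For (3), by (2) it suffices to express every $\mathcal{U}^{\Sym}_{\vec a}$ with odd entries as a linear combination of forms of the four listed shapes. I would proceed by a simultaneous induction on $(\ell(\vec a),r(\vec a))$ in lexicographic order, where $r(\vec a):=\#\{i:v_i>1\}$. The cases $\ell(\vec a)\leq 2$ or $r(\vec a)\leq 2$ correspond directly to listed generators. For $\ell(\vec a)\geq 3$ and $r(\vec a)\geq 3$, writing $\vec a=(v_1,\ldots,v_r,1^s)$ with each $v_i$ odd and $\geq 3$, I would expand the product $\mathcal{U}^{\Sym}_{(v_1,\ldots,v_{r-1},1^s)}\cdot\mathcal{U}_{(v_r)}$ via the quasi-shuffle: its leading depth-$\ell(\vec a)$ term is a positive multiple of $\mathcal{U}^{\Sym}_{\vec a}$, while the remaining depth-$\ell(\vec a)$ terms sum over tuples of strictly smaller $r$-statistic (by the key lemma applied to any $\diamond$-contraction that consumes at least one large letter), and the strictly lower-depth terms are $\mathcal{U}^{\Sym}_{\vec c}$'s already covered by the outer induction on depth. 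The first factor sits at $(\ell(\vec a)-1,r-1)$ in lex order and is in the span of listed generators by the inductive hypothesis on $r$. Solving for $\mathcal{U}^{\Sym}_{\vec a}$ finishes the step.

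The main obstacle is that the span $V$ of the listed generators is a priori only a vector space, not obviously an algebra, so the particular product $\mathcal{U}^{\Sym}_{(v_1,\ldots,v_{r-1},1^s)}\cdot\mathcal{U}_{(v_r)}$ must itself be shown to lie in $V$. I expect this to follow from a secondary induction on weight: the quasi-shuffle decomposes the product into a combination of $\mathcal{U}^{\Sym}_{\vec c}$'s of strictly smaller weight (handled by the weight-induction) together with top-weight terms, and every top-weight $\vec c$ appearing either equals $\vec a$ or has strictly smaller $r$-statistic and is therefore covered by the primary induction. Carrying out this triple induction cleanly, and verifying that all $\diamond$-contractions relevant to the bookkeeping do strictly decrease the $(\ell,r)$ lex-statistic at each step, is where the technical heart of the argument lies.
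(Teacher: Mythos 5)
Your treatment of (1) and (2) is essentially the paper's argument: the paper invokes Hoffman's symmetric-sum formula \eqref{Usymformula} to write $\mathcal{U}^{\Sym}_{\vec{a}}$ as a polynomial in the single series $\mathcal{U}_{(v)}$ indexed by set partitions, while you re-derive the same triangular relation $\prod_i\mathcal{U}_{(v_i)}=\mathcal{U}^{\Sym}_{\vec{a}}+(\text{lower-depth symmetrized sums})$ inductively from Proposition~\ref{ProductProperty}; your key parity lemma on $c_{i,j,m}$ is verbatim the paper's, and combined with \eqref{eq:Ua} this gives (1) and, by inverting the triangular system, (2). That part is correct.

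Part (3) has a genuine gap. Your reduction bottoms out at tuples like $\vec{a}=(w_1,w_2,w_3)$ with three entries $\geq 3$ and no $1$'s, where the quasi-shuffle identity reads $\mathcal{U}^{\Sym}_{(w_1,w_2)}\mathcal{U}_{(w_3)}=\mathcal{U}^{\Sym}_{(w_1,w_2,w_3)}+(\text{length-}2\text{ listed terms})$. Solving for $\mathcal{U}^{\Sym}_{(w_1,w_2,w_3)}$ reduces your claim to showing that the product $\mathcal{U}^{\Sym}_{(w_1,w_2)}\mathcal{U}_{(w_3)}$ --- essentially a product of \emph{three} Eisenstein series --- lies in the span $V$ of the listed generators, which is exactly equivalent to the statement you are trying to prove; no induction on $(\ell,r)$ or on weight breaks this circularity, because the problematic term sits at top weight with the same $r$-statistic. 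More fundamentally, every identity you use comes from Proposition~\ref{ProductProperty} alone, hence already holds in the abstract quasi-shuffle algebra $(\Q\langle A\rangle,*)$, where the symmetrized word $\sum_{\sigma}z_{w_{\sigma(1)}}z_{w_{\sigma(2)}}z_{w_{\sigma(3)}}$ is manifestly \emph{not} in the linear span of words of length $\leq 2$ and words with at most two non-$z_1$ letters. So (3) cannot follow from formal quasi-shuffle bookkeeping; it requires an input about the realization in $\Q[\![q]\!]$. The paper supplies this by combining the decomposition of $\widetilde{M}$ into powers of $G_2$ times modular forms with Zagier's theorem \cite{Zagier} that $M_k$ is spanned by products of at most two Eisenstein series --- a nontrivial analytic fact (Rankin--Selberg/period nonvanishing) that your proposal never invokes and that cannot be replaced by the combinatorics you describe.
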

	\begin{remark} Theorem~\ref{lem:symsum} (1) is given as an exercise in~\cite{BachmannLectures}.  Parts (2) and (3) of the theorem are the new observations.
	\end{remark}
	
	\begin{proof} We now prove (1).
		Symmetric sums in the context of quasi-shuffle algebras have been studied broadly. In particular, by \cite[Theorem~13]{Hoffman}
		we have
		\begin{equation}\label{Usymformula}
	\mathcal{U}_{\vec{a}}^{\Sym}(q)=\sum_{\sigma\in S_a} \sigma\mathcal U_{\vec a}(q) = \sum_{B\in \Pi_a} c(B) 
		\prod_{\beta\in B} \mathcal{U}_{{\color{black}\diamond_{\beta}}}(q),
		\end{equation}
		where $\Pi_a$ denotes all set partitions of the set $\{1,\ldots,a\}$, ${\color{black}\diamond_{\beta}:=\diamond_{i\in \beta} (v_i)}$ denotes the $\diamond$-product of the~$z_{v_i}$ (written as a finite sequence), and
		\[
		c(B) = (-1)^{a-|B|} \prod_{\beta\in B} (|\beta|-1)!
		\]
		is a constant. 
		In particular, this expresses $\sum_{\sigma\in S_a} \sigma\mathcal U_{\vec a}(q) $ as a polynomial in the modular Eisenstein series (up to constant terms)  $\mathcal{U}_{(v)}$ for certain $v\in \Z_+$. We now show that, in fact, $v$ only takes odd values.
		
		Recall that the entries $v_1,\ldots,v_a$ of $\vec{a}$ are all odd. Now, observe that for odd $i,j$, we have that $c_{i,j,m}=0$ if $m$ is odd. Namely, for odd $i,j,m$ the Bernoulli number $B_{i+j-m}$ is zero, unless $i+j-m=1$. In the latter case $m+1>i$ and $m+1>j$, and hence $\binom{i}{m+1}=\binom{j}{m+1}=0$. Hence, in the $\diamond$-product of $z_i$ and $z_j,$ where $i$ and $j$ are odd, we find that the coefficients of $z_{m+1}$ vanish if $m$ is odd. We conclude that $\sum_{\sigma\in S_a} \sigma\mathcal U_{\vec a}(q) $ can be expressed as a polynomial in $\mathcal{U}_{(v)}$ for certain odd integers~$v$. Now, (1) follows from~\eqref{eq:Ua}. 
	
		We now turn to the proof of~(2) and (3). By \eqref{eq:Ua}, the symmetrized MacMahonesque $q$-series generate all the Eisenstein series. If $\vec{a}=(v_1,v_2)$ is of length $2$, then by brute force using~\eqref{Usymformula} we find that $\mathcal{U}_{\vec{a}}^{\Sym}(q)$ is---up to an Eisenstein series---equal to the product of the Eisenstein series $\mathcal{U}_{(v_1)}(q)$ and $\mathcal{U}_{(v_2)}(q)$. It follows that $\mathcal{U}_{\vec{a}}^{\Sym}(q)$ with $|\vec{a}|+a\leq k$ additively generate the algebra of modular forms $M_{\leq k}$, as it is well-known that the space of modular forms is generated by polynomials in Eisenstein series of degree at most $2$ (for example, see \cite[Section~5]{Zagier}). Finally, using~\eqref{Usymformula} again, observe that for $\vec{a}=(v,1,1,\ldots,1)$ and $\vec{a}=(v_1,v_2,1,1,\ldots,1)$ one obtains arbitrary powers of the quasimodular $\mathcal{U}_{1}$ times Eisenstein series and products of Eisenstein series respectively, modulo quasimodular forms of lower depth. Here, we recall every quasimodular form can be written as a polynomial in the quasimodular Eisenstein series~$\mathcal{U}_1$ (which equals $G_2$ up to a constant), and the depth is defined as the degree in $\mathcal{U}_1$. Claims~(2) and (3) now follow by induction on the depth. 
	\end{proof}

\subsection{Proof of Theorem~\ref{GeneralDetection}}
	For $d=4$, we have example~$\Psi_1(n)$ in the Introduction. Multiplying by~$n^{d-4}$ and using \cref{Convolution} implies the existence for all $d\geq 4$, thereby confirming (1).
				
				We turn to (2). By \cref{lem:symsum} (2) the subspace of all $\mathcal U_{\vec{a}}(q)$ of weight at most $k$ generates~$\widetilde{M}_k$. 
				By \cref{lem:lindepH} there are $\gg k^2$ linearly independent quasimodular forms of weight at most $k$ for which the $n$th Fourier coefficient ($n\geq 2$) is non-negative and vanish if and only if $n$ is prime. We also note that the coefficient of $q^1$ in these forms also vanish. This proves (2).

\subsection{Proof of Theorem~\ref{Convolution}}
	The first part of the statement follows from \cref{ProductProperty}. In particular, the bound on $\vec{a}$ in the sum in the statement of the theorem, follows from estimating $\ell(\vec{a})\geq 1$ and the fact that $\mathcal{Z}_q$ is a filtered algebra.	The second part of the statement follows from the fact that the operator $D$ preserves $\mathcal{Z}_q$ and increases the weight by $2$ (see \cref{thm:BachmannKuhn} and \cite[Theorem~1.7]{BachmannKuhn}).

		\begin{landscape}
			\section*{Appendix}
			
			\noindent
			\renewcommand{\arraystretch}{1.95}
			\begin{small}
				\begin{table}[h]
					\caption{Prime detecting expressions for \cref{General_Ua}. See~\eqref{H definition} for the definition of the quasimodular forms $H_k$.}
					\label{tab:1} 
					\begin{tabular}{|c|c|}
						\hline
						Prime Detecting Expression & Quasimodular form\\ \hline\hline
						$\displaystyle (n^2 - 3n + 2)M_1(n) -8M_2(n)$ & $6H_6$ \\\hline
						$\displaystyle (3n^3 - 13n^2 + 18n - 8)M_1(n) + (12n^2 - 120n + 212)M_2(n) -960M_3(n)$ & $36H_8$  \\\hline
						$\displaystyle (25n^4 - 171n^3 + 423n^2 - 447n + 170)M_1(n) + (300n^3 - 3554n^2 + 12900n - 14990)M_2(n) + $ &\\
						$\displaystyle\quad (2400n^2 - 60480n + 214080)M_3(n) -725760M_4(n)$ & $90H_{10}$  \ \\\hline
						$\displaystyle (126n^5 - 1303n^4 + 5073n^3 - 9323n^2 + 8097n - 2670)M_1(n) +$ &\\
						$\displaystyle\quad(3024n^4 - 48900n^3 + 288014n^2 - 737100n + 695490)M_2(n) +$  & \\
						$\displaystyle\quad (60480n^3 - 1510080n^2 + 10644480n - 23496480)M_3(n) +$& \\
						$\displaystyle\quad (725760n^2 - 36288000n + 218453760)M_4(n) -580608000M_5(n)$  & $90H_{12}$ \\\hline
						$\scriptstyle\quad (300n^8 - 1542n^7 - 33049n^6 + 377959n^5 - 1651959n^4 + 3726801n^3 - 4575760n^2 + 2903750n - 746500)M_1(n) +$&\\
						$\scriptstyle\quad (12000n^7 - 91008n^6 - 2799900n^5 + 50637162n^4 - 351366300n^3 + 1239098170n^2 - 2210467000n + 1585493500)M_2(n) +$&\\
						$\scriptstyle\quad (432000n^6 - 3548160n^5 - 236343840n^4 + 5133219840n^3 - 42370071840n^2 + 161101416000n - 236150560800)M_3(n) +$&\\
						$\scriptstyle\quad (12096000n^5 - 72817920n^4 - 17599680000n^3 + 396192142080n^2 - 3123876672000n + 8555162112000)M_4(n) +$& \\
						$\scriptstyle\quad(193536000n^4 - 1056513024000n^2 + 21310248960000n - 112944125952000)M_5(n) + $&\\
						$\scriptstyle\quad (-46495088640000n + 604436152320000)M_6(n) -1115882127360000M_7(n)$  & $\scriptstyle 30(D^2+1)H_{14}+30H_{16}$
						\\\hline
					\end{tabular}
				\end{table}
			\end{small}
		\end{landscape}


\begin{thebibliography}{99}
		
			\bibitem{Matiyasevich} Y. V. Matiyasevich, \emph{Enumerable sets are Diophantine}, Doklady 
			Akademii Nauk SSSR (in Russian), \textbf{191} (1970),  279-282. 
			
			\bibitem{JSW} J. P. Jones, D. Sato,  H. Wada, and D. Wiens, \emph{Diophantine representation of the set of prime numbers}, American Mathematical Monthly \textbf{83} (1976), 449-464.

\bibitem{Andrews} G. E. Andrews, 
{\it The theory of partitions}, Reprint of the 1976 original, Cambridge Mathematical Library, Cambridge University Press, Cambridge, 1998.

\bibitem{Schneider1} R. Schneider, \emph{Eulerian series, zeta functions, and the arithmetic of partitions}, Thesis (Ph.D.)- Emory University, 2018. 220 pages.

\bibitem{SchneiderDawseyJust}  M. Locus Dawsey, M. Just and R. Schneider, \emph{A ``supernormal'' partition statistic}, J. Number Theory {\textbf 241} (2022), 120-141.

\bibitem{Schneider2} R. Schneider and A. V. Sills, \emph{Analysis and combinatorics of partition zeta functions},
Int. J. Number Theory \textbf{17}, No. 3 (2021), 805-814.

\bibitem{Schneider3} R. Schneider, \emph{Partition zeta functions}, Research in Number Theory
\textbf{2} (2016), Article 9.

\bibitem{MacMahon} P. A. ~MacMahon, \emph{Divisors of Numbers and their Continuations in the Theory of Partitions}, Proc. London Math. Soc. (2) \textbf{19} (1920), no.1, 75-113 
[also in Percy Alexander MacMahon Collected Papers, Vol.2, pp. 303--341 (ed. G.E. Andrews), MIT Press, Cambridge, 1986].

\bibitem{BachmannMA} H.~Bachmann, \emph{Multiple Zeta-Werte und die Verbindung zu Modulformen durch Multiple Eisensteinreihen}, Masterarbeit in Mathematik, Universit\"at Hamburg, 2012.

\bibitem{BachmannKuhn} H.~Bachmann and U.~K\"uhn, \emph{The algebra of generating functions for multiple divisor sums and applications to multiple zeta values},. Ramanujan J. \textbf{40} (2016), no.3, 605--648.

\bibitem{BachmannLectures} H.~Bachmann, \emph{Lectures on Multiple zeta values and modular forms}, Nagoya University, Spring 2020 {\url{https://www.henrikbachmann.com/uploads/7/7/6/3/77634444/mzv_mf_2020_v_5_4.pdf}}.
	
\bibitem{Andrews-Rose} G. E.~Andrews and S. C. F.~Rose, \emph{MacMahon's sum-of-divisors functions, Chebyshev polynomials, and quasimodular forms},  J. Reine Angew. Math. \textbf{676} (2013), 97--103.

\bibitem{rose} S. C. F. ~Rose, \emph{Quasimodularity of generalized sum-of-divisors functions}, Research in Number Theory \textbf{1} (2015), Paper No. 18, 11 pp.


\bibitem{Ono} K. Ono, \emph{The web of modularity: arithmetic of the coefficient of modular forms and $q$-series}. CBMS Regional Conference Series in Mathematics, {\bf 102}, Amer. Math. Soc., Providence, RI 2004.
			
\bibitem{Lelievre} S. Leli\`evre, \emph{Quasimodular forms with Fourier coefficients zero at primes}, unpublished note.
			
							
\bibitem{123} Don Zagier, \emph{Elliptic modular forms and their applications},  In The 1-2-3 of modular forms, 
		Universitext, pages 1--103. Springer, Berlin, 2008.
		
\bibitem{AOS} T.~Amdeberhan, K. Ono and A. Singh, \emph{MacMahon's sums-of-divisors and allied $q$-series}, {\url{https://arxiv.org/abs/2311.07496}}, preprint.

\bibitem{AAT} T. ~Amdeberhan, G. E. ~Andrews, and R. ~Tauraso,
			\emph{ Extensions of MacMahon's sums of divisors}, Res. Math. Sci. {\bf 11}, 8 (2024).

\bibitem{Bachmann} H. Bachmann, {\it MacMahon's sums-of-divisors and their connection to multiple Eisenstein series}, arXiv:2312.06132

\bibitem{OnoSingh} K. Ono and A. Singh, \emph{Remarks on MacMahon's $q$-series}, {\url{https://arxiv.org/abs/2402.08783}}, preprint.
		
\bibitem{Hoffman} M. E. Hoffman, \emph{Quasi-shuffle algebras and applications}, Algebraic combinatorics, resurgence, moulds and applications (CARMA). Vol. 2, 327–348.
IRMA Lect. Math. Theor. Phys., 32, EMS Publishing House, Berlin, 2020.
			
\bibitem{Zagier}  D. Zagier, {\it Modular forms whose Fourier coefficients involve zeta-functions of quadratic fields}. In: Serre, J.-P., Zagier,
D.B. (eds.), modular functions of one variable, VI (proceedings international conference, university Of Bonn, Bonn
					
				
			
		\end{thebibliography}
	\end{document}